\numberwithin{equation}{section}
\newcommand{\R}{\mathbb{R}}
\newtheorem{remark}{Remark}[section]
\newtheorem{thm}{Theorem}[section]
\newtheorem{lemma}[thm]{Lemma}
\newtheorem{corollary}[thm]{Corollary}
\newtheorem{proposition}[thm]{Proposition}
\newcommand{\Extend}[5]{\ext@arrow0099{\arrowfill@#1#2#3}{#4}{#5}}
\begin{document}

\title[dimension of divergence set of Schr\"{o}dinger  with complex time ]{On the dimension of divergence sets of Schr\"{o}dinger equation with complex time}

\author{Jiye Yuan}
\address{The Graduate School of China Academy of Engineering Physics, P. O. Box 2101, Beijing, China, 100088}
\email{yuan\_jiye@126.com}

\author{Tengfei Zhao}%
\address{Beijing Computational Science Research Center,
No. 10 West Dongbeiwang Road, Haidian District, Beijing, China, 100193 }
\email{zhao\_tengfei@csrc.ac.cn}%

\author{Jiqiang Zheng}
\address{Institute of Applied Physics and Computational Mathematics,
P. O. Box 8009,\ Beijing,\ China,\ 100088}
\email{zheng\_jiqiang@iapcm.ac.cn, zhengjiqiang@gmail.com}

\begin{abstract}
This article studies the pointwise convergence for the fractional
Schr\"odinger operator $P^{t}_{a,\gamma}$ with complex time in one spatial dimension.
Through establishing $L^2$-maximal estimates for
initial datum in $H^{s}(\mathbb{R})$,
we see that the solution converges to the initial
data almost everywhere  with $s>\frac14 a(1-\frac1\gamma)_+$ when $0<a<1$  and $s>\frac{1}{2}(1-\frac{1}{\gamma})_{+}$ when $a=1$.
By constructing counterexamples,
we show that this result is  almost
 sharp up to the endpoint.
These results extends the results of P. Sj\"olin, F. Soria and A. Baily.
Second, we  study the Hausdorff dimension of the set of the divergent points,
by showing some $L^1$-maximal estimates with respect to  general Borel measure.
Our results reflect the interaction between dispersion effect and dissipation effect, arising from the fractional Schr\"{o}dinger type operator $P^{t}_{a,\gamma}$ with the complex time.
\end{abstract}

 \maketitle

\begin{center}
 \begin{minipage}{100mm}
   { \small {{\bf Key Words:}  Ginzburg--Landau equation;  maximal inequality estimate; pointwise convergence; Hausdorff dimension.
   }
      {}
   }\\
    { \small {\bf AMS Classification:}
      {42B25,  35Q56, 47A63.}
      }
 \end{minipage}
 \end{center}


\section{Introduction}

\noindent

In this article, we are going to study pointwise
convergence of the fractional Schr\"{o}dinger  operator with complex time in one spatial dimension as follows
\begin{equation}\label{srd-complex}
P^{t}_{a,\gamma}f(x) = e^{ig(t)(-\Delta)^{\frac{a}{2}}}f(x)
= \int_{\mathbb{R}}\hat{f}(\xi)e^{it|\xi|^{a} -t^{\gamma}|\xi|^{a}}e^{ix\xi}\,\mathrm{}{d}\xi,
\end{equation}
where $t>0$, $\gamma >0$, $g(t)=t+it^{\gamma}$, and $a>0$.
The operator $P^{t}_{a,\gamma}$ generates several classical equations, for example,
\begin{enumerate}

  \item If  $g(t)=it$, \eqref{srd-complex} is the solution to
  the linear fractional dissipative equation (see \cite{MYZ,ouhabaz})
  \begin{equation}\label{bas-disp}
 \begin{cases}
 \partial_{t}u  +(-\Delta)^\frac{a}{2} u = 0, ~~(t,x) \in [0,\infty) \times \R\\
    u(x,0) = f(x),~~x\in \R.
  \end{cases}
\end{equation}

   \item If $g(t)=t$ and $a=2$, \eqref{srd-complex} is the solution
    to the basic and universal form of the Schr\"{o}dinger
 equation\footnote{This corresponds to the case of $\gamma =\infty$ when we consider $t\in[0,1)$.},
   \begin{equation}\label{bas-schr}
 \begin{cases}
 i\partial_{t}u  -\Delta u = 0, ~~(t,x) \in \R \times \R\\
    u(x,0) = f(x),~~x\in \R.
  \end{cases}
\end{equation}

\item If $g(t)= e^{i\theta} t$  and $a=2$,  \eqref{srd-complex} is the solution
    to  the linear complex  Ginzburg--Landau equation
  \begin{equation}\label{GL}
 \begin{cases}
 \partial_{t}u  - e^{i\theta }\Delta u = 0, ~~(t,x) \in [0,\infty) \times \R\\
    u(x,0) = f(x),~~x\in \R,
  \end{cases}
\end{equation}
  where $\theta \in[-\frac{\pi}{2},\frac{\pi}{2} ]$,  see \cite{CDW-2013} for example.

\end{enumerate}

For the equation \eqref{bas-disp}, Miao-Yuan-Zhang \cite{MYZ} obtained pointwise estimates  of the kernel function $P(x,t,a)$
of  the semigroup $e^{-t (-\Delta)^{\frac{a}{2}}}$,
\begin{equation}
|P(x,t,a)| \leq C \frac{t}{(t^\frac2a +|x|^2)^{\frac{1+a}{2}}} .
\end{equation}
From this estimate, one can find that $|e^{-t (-\Delta)^{\frac{a}{2}}} f(x)| \leq C  \mathcal{M}(f)(x)$,
where $\mathcal{M}$ is the Hardy-Littlewood maximal operator.
By the boundedness of $\mathcal{M}$, for $f\in L^{p}(\R)$ with $1\le p<\infty$, we have
$$
e^{-t (-\Delta)^{\frac{a}{2}}} f(x) \rightarrow f(x), \quad \text{for} \,\,a.e.\,\,x\in \R,
$$
as the time $t$ tends to $0^+$.

For the equation \eqref{bas-schr}, Carleson \cite{Carleson} put forward a question about the range of exponent $\sigma$ for the Sobolev space $H^{\sigma}(\mathbb{R}^{n})$ such that
for $ f\in H^{\sigma}(\mathbb{R}^{n})$, there is
$$
e^{it\Delta}f(x) \rightarrow f(x) \qquad \text{ a.~e.~} \quad x\in \mathbb{R}^{n},
$$
as the time t tends to $0$.
He proved the almost everywhere  convergence for the exponent $\sigma \geq \frac{1}{4}$ in dimension 1,
which is sharp by the counterexamples given by Dahlberg and Kenig\cite{D-K}.
Barcelo, Bennet, Carbery and Rogers \cite{BBCR} refined these  results by
showing the divergent set such that, for $\frac14 \leq \sigma \leq \frac12 $,
$$
\textup{~ dim~} \big\{x\in\R: e^{it\Delta}f(x) \not\rightarrow f(x), \text{~as~}t\rightarrow0 \big\}  ~\leq ~1-2\sigma ,
$$
where dim $U$ is the Hausdorff dimension of a set $U \subset \R.$

This paper is devoted to the study of the pointwise convergence and the Hausdorff dimension of divergent set of  the operator
$P^{t}_{a,\gamma}$.
For the sake of simplicity, we only recall some related results in one spatial dimension.
For the case $g(t)=t$, Sj\"{o}lin\cite{Sjolin}
proved that the almost everywhere convergence holds for the operator $e^{it(-\Delta)^{\frac{a}{2}}}$ if and only if $\sigma\ge \frac{1}{4}$ when $a>1$.
In \cite{Walther,Walther-global}, Walther considered 
the fractional  Schr\"{o}dinger  operator $e^{it(-\Delta)^{\frac{a}{2}}}$ of a concave phase case,  that is, $a \in(0,1)$, and proved almost sharp $L^2$ maximal estimates(up to the endpoint) for functions in $H^s(\R)$ if $s>\frac{a}{4}$. We also refer to Rogers and Villarroya \cite{RV-2008} for the half wave operator $e^{it\sqrt{-\Delta}}$, where they showed the $L^2$ maximal estimates for $s>\frac12.$
For the   operator $P^{t}_{a,\gamma}$, Sj\"{o}lin in \cite{Sjo} and in \cite{Sjo So} together with Soria
 studied the classical Schr\"{o}dinger operator with a complex parameter($a=2$ and $\gamma\in(0,\infty)$).
Later, using the Kolmogrov-Selierstov-Plessner method, Bailey\cite{Bailey} improved their results 
to the case $a>1$.

We first consider positive parts question in the case $0<a\leq1$. Let $s_{+} = \max \{0, s\}$ for $s\in \mathbb{R}$
and define the  maximal operator $P^{*}_{a,\gamma}$ by
$$
P^{*}_{a,\gamma}f(x) = \sup\limits_{0 < t <1} \big|P^{t}_{a,\gamma}f(x)\big|.
$$
The following is established:

\begin{thm}\label{main-thm-1}
\begin{enumerate}
  \item[(i)]{\bf Local estimate :}
For $a\in(0,1)$ and $\gamma \in(0,\infty)$, we have
\begin{equation}\label{local-maximal-estimate}
\left\|P^{*}_{a,\gamma}f(x)\right\|_{L^{2}(B(0,1))} \lesssim \|f\|_{H^{s}(\mathbb{R})},
\end{equation}
for $ f\in H^{s}(\mathbb{R})$ and $s>\frac{1}{4}a(1-\frac{1}{\gamma})_{+}$. While for $a=1$ and
$\gamma\in(0,\infty)$, \eqref{local-maximal-estimate} holds for $s>\frac{1}{2}(1-\frac{1}{\gamma})_{+}.$

  \item[(ii)]{\bf Global estimate :}
For $a\in(0,1) $ and $\gamma \in(1,\infty)$, we have
\begin{equation}\label{maximal-est-Hs}
\left\|P^{*}_{a,\gamma}f(x)\right\|_{L^{2}(  \R)} \lesssim \|f\|_{H^{s}(\mathbb{R})},
\end{equation}
for $ f\in H^{s}(\mathbb{R})$ and $s > \frac{1}{4}a(1-\frac{1}{\gamma})$.
While for $a=1$ and $\gamma\in(1,\infty)$, \eqref{maximal-est-Hs} holds for $s>\frac{1}{2}(1-\frac1\gamma)$.
  \item[(iii)]{\bf $L^{p}$ estimate :} For $a\in(0,1]$ and $\gamma\in(0,1]$, the maximal estimate holds in $L^p(\R)$ for $1<p\le\infty$, that is,
\begin{equation}\label{maximal-est-L2}
\left\|P^{*}_{a,\gamma}f(x)\right\|_{L^{p}(  \R)} \lesssim \|f\|_{L^{p}(  \R)}, \quad \text{for}\,\,f\in L^p(\mathbb{R}).
\end{equation}
For $p = 1$, we have
\begin{equation}\label{p=1}
\left|\{x\in\R: |P^{\ast}_{a,\gamma}f(x)| > \lambda\}\right| < C \frac{\|f\|_{L^{1}}}{\lambda},
\end{equation}
for $f\in L^{1}(\R)$, $\lambda >0$.

\end{enumerate}
\end{thm}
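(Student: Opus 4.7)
The plan is to combine a Littlewood--Paley decomposition in frequency with a critical time-splitting that separates the dispersion-dominated and dissipation-dominated regimes. Writing $f=\sum_{k\ge 0}f_k$ with $\widehat{f_k}$ supported in $\{|\xi|\sim 2^k\}$, I reduce parts (i) and (ii) to the frequency-localized maximal bound
\[
  \bigl\|\sup_{0<t<1}|P^{t}_{a,\gamma}f_k|\bigr\|_{L^2(B(0,1))}\lesssim 2^{ka(1-1/\gamma)_+/4}\,\|f_k\|_{L^2}
\]
for $a\in(0,1)$, together with its variant with exponent $2^{k(1-1/\gamma)_+/2}$ for $a=1$. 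Summation over $k$ using Cauchy--Schwarz with a slow geometric weight then yields the $H^s$ bound just above the critical exponent. For the global statement (ii) with $\gamma>1$, the strong damping $e^{-t^\gamma|\xi|^a}$ controls the spatial tails, and the local bound extends to $\mathbb R$ by translation invariance and summation over unit balls.

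To prove the frequency-localized bound I split the time interval at the balance scale $T_k:=2^{-ka/\gamma}$, where $t^\gamma|\xi|^a\sim 1$ for $t\sim T_k$ and $|\xi|\sim 2^k$. In the dissipation-dominated range $t\in(T_k,1)$ a dyadic decomposition in time together with the exponential decay of $e^{-t^\gamma 2^{ka}}$ produces a contribution bounded by a multiple of $\|f_k\|_{L^2}$. In the dispersion-dominated range $t\in(0,T_k)$ the damping factor lies in $[e^{-1},1]$ and is thus a harmless bounded Fourier multiplier, so the problem reduces to controlling the pure dispersive maximal operator $\sup_{t\in(0,T_k)}|e^{it(-\Delta)^{a/2}}f_k|$ on the shorter time interval $(0,T_k)$. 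The factor $(T_k\cdot 2^{ka})^{1/4}=2^{ka(1-1/\gamma)/4}$ for $a\in(0,1)$ (resp.\ $(T_k\cdot 2^k)^{1/2}$ for $a=1$) arises from the restricted-time version of Walther's local maximal estimate \cite{Walther,Walther-global} (resp.\ the half-wave translation estimate).

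For part (iii), the plan is to extend the kernel estimate of Miao--Yuan--Zhang \cite{MYZ} to complex time $s=t^\gamma-it$, which lies in the sector $|\arg s|\le\pi/4$ for $\gamma\in(0,1]$, obtaining the Poisson-type pointwise bound
\[
  |K_t(x)|\le C\,\frac{t^\gamma}{(t^{2\gamma/a}+|x|^2)^{(1+a)/2}},
\]
where $K_t$ is the convolution kernel of $P^t_{a,\gamma}$. This gives $|P^t_{a,\gamma}f(x)|\le C\,\mathcal Mf(x)$ uniformly in $t\in(0,1)$, so $L^p$-boundedness for $1<p\le\infty$ and the weak-$(1,1)$ bound follow from the Hardy--Littlewood maximal theorem. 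Note that part (iii) recovers parts (i) and (ii) in the range $\gamma\in(0,1]$, where $(1-1/\gamma)_+=0$ and only the $L^2$ bound is needed.

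The hard part will be preserving the oscillatory $\tfrac14$-gain in the dispersion-dominated regime. A direct application of the Sobolev embedding $H^{1/2}_t\hookrightarrow L^\infty_t$ combined with Plancherel gives only the weaker exponent $s>\tfrac{a}{2}(1-1/\gamma)_+$, a factor of two off. The sharp bound requires a restricted-time variant of Walther's bilinear/stationary-phase argument in which the gain $T^{1/4}$ survives when the time interval shrinks from $(0,1)$ to $(0,T_k)$; the resulting oscillatory--dissipative interplay is the main technical content of Theorem~\ref{main-thm-1}.
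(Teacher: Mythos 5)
Your part (iii) is essentially the paper's argument: the paper proves a Poisson-type bound for the kernel of $e^{-(1+i)t(-\Delta)^{a/2}}$ (Lemma~\ref{lemma-Poisson}) and then uses the comparison $t^\gamma\ge t$ on $(0,1)$ when $\gamma\le 1$ (Remark~\ref{remark-Poisson}), while you write the uniform sector estimate $|\arg(t^\gamma-it)|\le\pi/4$ directly. Both lead to $|P^t_{a,\gamma}f|\lesssim\mathcal Mf$ and then to~\eqref{maximal-est-L2}--\eqref{p=1}; your version is slightly cleaner because it avoids a composition of maximal operators, but it is the same idea.

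For parts (i) and (ii) your route is genuinely different from the paper's, which does not split time but instead proves a single uniform kernel estimate (Lemma~\ref{maximal estimate}): after linearizing the supremum via a Kolmogorov--Seliverstov--Plessner / $TT^*$ scheme, it shows that
\[
\Big|\int e^{i(t_1-t_2)|\xi|^a-ix\xi}e^{-(t_1^\gamma+t_2^\gamma)|\xi|^a}(1+\xi^2)^{-\alpha/2}(1-\chi)\mu(\xi/N)\,d\xi\Big|\le CK(x)
\]
with $K\in L^1(\mathbb R)$ (global) or with the explicit local profile $|x|^{\alpha-1}+|x|^{-\sigma}$, \emph{uniformly} in $t_1,t_2\in(0,1)$, by combining Littlewood--Paley in $\xi$ with Van der Corput and the inequality $e^{-y}\lesssim_\beta y^{-\beta}$. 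The dissipation is converted into a polynomial gain by optimizing $\beta$, and it is this choice (e.g.\ $\beta=\tfrac1{2\gamma}-\tau$) that produces the exponent $\tfrac14a(1-\tfrac1\gamma)$. Your decomposition at the balance scale $T_k=2^{-ka/\gamma}$ captures the same phenomenon but reorganizes it, and its main advantage is conceptual modularity: it cleanly reduces the problem to a restricted-time version of Walther's maximal estimate for the pure dispersive flow.

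Two points need to be tightened. First, the claim that the dissipation-dominated range $t\in(T_k,1)$ contributes only $O(\|f_k\|_{L^2})$ is not what a careful dyadic-in-time computation gives: one still picks up $2^{ka(1-1/\gamma)/4}\|f_k\|_{L^2}$ from that range (the same factor as the dispersion regime), which is consistent with the target bound but is not the ``constant multiple'' you assert; a naive Sobolev-in-time argument gives a worse $2^{ka(1-1/\gamma)/2}$, so the dyadic-in-time bookkeeping has to be done exactly as you outline (with the restricted-time oscillatory gain active on \emph{each} dyadic time block, not just on $(0,T_k)$). Second, the passage from the local bound to the global bound~\eqref{maximal-est-Hs} ``by translation invariance and summation over unit balls'' does not work as stated: summing a translation-invariant bound $\|P^*f\|_{L^2(B(y,1))}\lesssim\|f\|_{H^s}$ over a lattice of $y$ diverges. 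You need actual spatial decay of the kernel uniformly in $t$, which is precisely what the paper's global Lemma~\ref{maximal estimate}(i) supplies ($K\in L^1(\mathbb R)$, used with Young's convolution inequality in~\eqref{equ:kxyest-global}), together with a separate Sobolev-embedding-in-time argument for the low-frequency piece $f_1$ ((\ref{low-norm})--(\ref{global-f1})); this low-frequency step is absent from your sketch. Finally, you candidly flag that the restricted-time concave-phase estimate with the $T^{1/4}$ gain is the technical heart and is left unproved; in the paper it is subsumed in the proof of Lemma~\ref{maximal estimate}(Case~2), so as it stands your proposal is a plausible outline rather than a proof.
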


\begin{remark}
$(1)$  Theorem \ref{main-thm-1} extends the results of
\cite{Bailey,Sjo,Sjo So}  to the case that $0<a\leq1$.

$(2)$ For $\gamma\in(0,1]$,
  we note from Remark \ref{remark-Poisson} below that the dissipative part plays a leading role,
  then we can obtain the boundedness of the operator $P^{t}_{a,\gamma}$,
 in accord with the operator $e^{-t(-\Delta)^{\frac{a}{2}}}$ for any $s>0,$ see the detail in Remark \ref{remark-Poisson}.

$(3)$ For $\gamma > 1$,
   if we just consider the dispersive effect for the operator $e^{it(-\Delta)^\frac{a}{2}}$, we can get
  \eqref{maximal-est-Hs} with $s>\frac{a}{4}$  for $a<1$, see Remark \ref{remark-Poisson} below for more detail.
Furthermore, we consider the dissipative effect for the operator $P^{t}_{a,\gamma}$, we can improve this result  to $s>\frac{a}{4}\big(1-\frac{1}{\gamma}\big)$ for $a<1$ as in Theorem \ref{main-thm-1}.

$(4)$ From Theorem  \ref{main-thm-1} and the result in \cite{Bailey,Sjo,Sjo So}, we know that the index of regularity $s$ is not continuous with respect to $a$ at the point $a=1$ due to the finite speed propagation for the wave equation, see Fig.1 below. This phenomenon corresponds to fractional Schr\"odinger operator $e^{it(-\Delta)^\frac{a}2}$ and half wave operator $e^{it\sqrt{-\Delta}}$ as in \cite{RV-2008,Sjolin}.

\begin{center}
 \begin{tikzpicture}[scale=1]
 \draw[->] (0,0) -- (6,0) node[anchor=north] {$a$};
\draw[->] (0,0) -- (0,3)  node[anchor=east] {$s$};
\path (2,-0.8) node(caption){{\rm Fig 1}. $\gamma>1$};  

\draw (3.9,0.83) node[anchor=west] {A};
\draw (4,1.7) node[anchor=west] {B};

 \draw (0,0) node[anchor=north] {O}
 (4,0) node[anchor=north] {$1$}
 (5,0) node[anchor=north] {$\frac{\gamma}{\gamma-1}$};

\draw  (0, 2.3) node[anchor=east] {$\frac12$}
(0, 1.7) node[anchor=east] {$\frac12(1-\frac{1}{\gamma})$}
 (0.5, 1.2) node[anchor=east] {$\frac14$}
  (0, 0.93) node[anchor=east] {$\frac14(1-\frac{1}{\gamma})$};

 \draw[thick] (5,1.2) -- (6,1.2)
(-0.05, 2.3)-- (0.05, 2.3)
(-0.05, 1.7)-- (0.05, 1.7)  (-0.05, 1.2)-- (0.05, 1.2)   (-0.05, 0.96)-- (0.05, 0.96)
(4,-0.05) -- (4,0.05)  (5,-0.05) -- (5,0.05);  
\draw[dashed,thick] (0,0) -- (5,1.2); 

\draw (4,0.96) circle (0.06);
\draw {(4,1.7) circle (0.06)}[fill=gray!90];

\end{tikzpicture}

\end{center}

\end{remark}

The proof of Theorem \ref{main-thm-1} (i,ii) is based on some oscillatory estimates and the Littlewood-Paley decomposition(see Section \ref{osc-est}), which shows the interaction between the dispersive effect and the dissipative effect for the operator $P^{t}_{a,\gamma}$.
Theorem \ref{main-thm-1} (iii) is proved by showing $P^*_{a,1}$ and the fractional
  dissipation $e^{-t(-\Delta)^\frac{a}{2}}$ is bounded by the Hardy-Littlewood maximal functions. 
As a direct consequence, through a standard argument,
 we obtain the almost everywhere convergence results:

\begin{corollary}\label{point}
Let $0<a\le1$. For $f \in L^p(\mathbb{R})$ with $1\le p <\infty$ if $\gamma\in (0,1]$, and $f \in H^{s}(\mathbb{R})$ if $\gamma\in (1,\infty)$, $s > \frac{1}{4}a(1-\frac{1}{\gamma})$ with $0<a<1$, and $s> \frac{1}{2}(1-\frac1\gamma)$ with $a=1$,
 then%
\begin{equation}\label{aeconv}
 \lim_{t\rightarrow 0^+ } P^{t}_{a,\gamma}f(x) =f(x)
\end{equation}
 holds almost everywhere.
\end{corollary}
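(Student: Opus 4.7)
The plan is the classical maximal-function plus density argument: once one has the maximal estimates of Theorem \ref{main-thm-1}, Corollary \ref{point} is essentially a formal consequence.

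\textbf{Step 1: Convergence on a dense subclass.} I would first verify \eqref{aeconv} for every Schwartz function $g\in\mathcal{S}(\R)$. Writing
\[
P^{t}_{a,\gamma}g(x)-g(x)=\int_{\R}\hat g(\xi)\bigl(e^{it|\xi|^{a}-t^{\gamma}|\xi|^{a}}-1\bigr)e^{ix\xi}\,d\xi,
\]
the integrand is dominated by $2|\hat g(\xi)|\in L^{1}(\R)$ and tends to $0$ pointwise in $\xi$ as $t\to 0^{+}$. Dominated convergence gives $P^{t}_{a,\gamma}g(x)\to g(x)$ for every $x\in\R$. Since $\mathcal{S}(\R)$ is dense in $L^{p}(\R)$ for $1\le p<\infty$ and in $H^{s}(\R)$ for any $s\in\R$, a suitable approximation $g\to f$ is available in each case.

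\textbf{Step 2: Transfer to the full function space via the maximal estimate.} For $f$ in the relevant space, introduce
\[
\Omega f(x):=\limsup_{t\to 0^{+}}\bigl|P^{t}_{a,\gamma}f(x)-f(x)\bigr|.
\]
For any $g\in\mathcal{S}(\R)$, Step 1 and the triangle inequality give $\Omega f(x)=\Omega(f-g)(x)\le P^{*}_{a,\gamma}(f-g)(x)+|f-g|(x)$, and hence for every $\lambda>0$,
\[
\bigl|\{x:\Omega f(x)>\lambda\}\bigr|\le\bigl|\{x:P^{*}_{a,\gamma}(f-g)(x)>\lambda/2\}\bigr|+\bigl|\{x:|f-g|(x)>\lambda/2\}\bigr|.
\]
Now I would insert the appropriate estimate from Theorem \ref{main-thm-1}. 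In the case $\gamma\in(0,1]$ I use \eqref{maximal-est-L2} (combined with Chebyshev) for $1<p<\infty$ and the weak-type bound \eqref{p=1} for $p=1$, to obtain
\[
\bigl|\{\Omega f>\lambda\}\bigr|\lesssim \lambda^{-p}\|f-g\|_{L^{p}}^{p}\quad(1\le p<\infty).
\]
In the case $\gamma>1$ with $f\in H^{s}(\R)$, I use the global $L^{2}$ bound \eqref{maximal-est-Hs} together with $\|f-g\|_{L^{2}}\le\|f-g\|_{H^{s}}$ to get
\[
\bigl|\{\Omega f>\lambda\}\bigr|\lesssim \lambda^{-2}\|f-g\|_{H^{s}}^{2}.
\]

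\textbf{Step 3: Conclude by density.} Letting $g$ approach $f$ in the relevant norm, the right-hand sides tend to $0$, so $|\{\Omega f>\lambda\}|=0$ for every $\lambda>0$, and therefore $\Omega f=0$ almost everywhere, which is exactly \eqref{aeconv}.

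\textbf{Main obstacle.} There is essentially no genuine obstacle: the heavy analytic lifting lies entirely in Theorem \ref{main-thm-1}, and the corollary is the standard packaging. The only subtle point worth flagging is simply that one must use the global maximal bound \eqref{maximal-est-Hs} (rather than the local one \eqref{local-maximal-estimate}) to conclude almost everywhere convergence on all of $\R$ in the $\gamma>1$ regime; if only the local estimate were available, one would instead apply the argument on each ball $B(x_{0},1)$ and take a countable union, which would still suffice but is slightly less direct.
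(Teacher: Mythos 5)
Your proposal is correct and follows essentially the same route as the paper: verify pointwise convergence for Schwartz data by dominated convergence, then transfer to $L^p$ (resp.\ $H^s$) using the global maximal bounds of Theorem \ref{main-thm-1}(ii) and (iii) together with Chebyshev and density. The paper's Case~1/Case~2 split is exactly your $\gamma>1$ vs.\ $\gamma\in(0,1]$ dichotomy, and your remark about needing the global rather than local estimate matches the paper's use of \eqref{maximal-est-Hs}.
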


By employing the theorems of Nikishin\cite{Ninkishin}, we  construct a counterexample to obtain a necessary condition for the pointwise convergence of the operator $P^t_{a,\gamma}$,
which indicates that Corollary \ref{point} is sharp up to the endpoint.
\begin{thm}\label{neg-thm}
For $0<a<1$, if $\gamma >1$, and $s<\frac{1}{4}a(1-\frac{1}{\gamma})$, and for $a=1$, if $\gamma>1$ and $s<\frac{1}{2}(1-\frac{1}{\gamma})$, then
 the almost everywhere convergence \eqref{aeconv} fails.

\end{thm}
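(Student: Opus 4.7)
The plan is to argue by contradiction, following the Dahlberg--Kenig--Bailey blueprint. Suppose that $P^t_{a,\gamma}f(x)\to f(x)$ a.e.\ as $t\to 0^+$ for every $f\in H^s(\R)$ with $s$ strictly below the claimed threshold. Applying Nikishin's maximal theorem to the linear family $\{P^t_{a,\gamma}\}_{t\in(0,1)}$ on $L^2$ of the unit ball yields a weak-type maximal estimate: there is a function $\phi$ with $\phi(\alpha)\to 0$ as $\alpha\to\infty$ such that
\[
\bigl|\{x\in B(0,1):P^*_{a,\gamma}f(x)>\lambda\}\bigr|\le\phi\bigl(\lambda/\|f\|_{H^s}\bigr),\qquad f\in H^s,\ \lambda>0.
\]
It therefore suffices to produce a sequence $f_N\in H^s$ with $\|f_N\|_{H^s}\lesssim 1$ but $P^*_{a,\gamma}f_N\gtrsim M_N\to\infty$ on a subset of $B(0,1)$ of measure bounded below.

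The atom of the construction is a frequency-localised wavepacket $g_N$ with $\widehat{g_N}=\chi_{[N,N+N^b]}$, the bandwidth exponent $b$ being dictated by the dissipation barrier. With $t_N=N^{-a/\gamma}$, the largest time for which the factor $e^{-t^\gamma N^a}$ remains of order one, the quadratic Taylor remainder of the dispersive phase $t|\xi|^a$ across the band is $O(t_N N^{a-2+2b})$, which stays bounded precisely for $b\le b^*:=1-\tfrac{a}{2}(1-1/\gamma)$; when $a=1$ the dispersive phase is linear and the analogous constraint from the dissipative factor $t^\gamma\xi$ instead gives $b^*=1$. With $b=b^*$, linear stationary phase shows $|P^{t}_{a,\gamma}g_N(x)|\gtrsim N^b$ whenever $x$ lies within $N^{-b}$ of the moving peak $x_0(t)=-atN^{a-1}$ for some $t\in[\delta t_N,t_N]$, while integration by parts gives the off-peak decay $|P^{t}_{a,\gamma}g_N(y)|\lesssim|y-x_0(t)|^{-1}$. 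As $t$ sweeps $[\delta t_N,t_N]$, the peak $x_0(t)$ traces only a short interval of length $L\sim N^{a(1-1/\gamma)-1}$ (respectively $N^{-1/\gamma}$ when $a=1$), which shrinks as $N\to\infty$, so varying $t$ alone cannot occupy a macroscopic set.

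To upgrade the divergence to a set of positive measure we superpose $M=\lfloor 1/L\rfloor$ translates of $g_N$ with Rademacher signs, defining
\[
f_N(x)=\frac{1}{\sqrt{M}\,N^{s+b/2}}\sum_{k=0}^{M-1}\varepsilon_k\,g_N(x-kL),\qquad\varepsilon_k=\pm 1,
\]
so that $\mathbb{E}\|f_N\|_{H^s}^2\sim 1$ by Plancherel. For each target $x^*\in[0,1]$ let $k=k(x^*)$ be the index with $x^*-kL\in[-L,0]$ and $t_\ast=t_\ast(x^*)\in[\delta t_N,t_N]$ the solution to $x^*-kL=x_0(t_\ast)$; the $k$-th summand then contributes $\sim N^b/(\sqrt{M}\,N^{s+b/2})$. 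The cross terms $\sum_{j\ne k}\varepsilon_j P^{t_\ast}_{a,\gamma}g_N(x^*-jL)$ are controlled via Khintchine using the $\ell^2$-summable tail bound $|P^{t_\ast}_{a,\gamma}g_N(x^*-jL)|\lesssim|(k-j)L|^{-1}$, producing a contribution of order $M/(\sqrt{M}\,N^{s+b/2})$, which is negligible compared with the main term because $M=N^{1-a(1-1/\gamma)}\ll N^b=N^{1-\frac{a}{2}(1-1/\gamma)}$ for $\gamma>1$ (and likewise when $a=1$). A Fubini--Chebyshev selection in the signs produces a realisation for which $P^*_{a,\gamma}f_N\ge c N^{\frac{a}{4}(1-1/\gamma)-s}$ when $a<1$, or $P^*_{a,\gamma}f_N\ge c N^{\frac{1}{2}(1-1/\gamma)-s}$ when $a=1$, on a subset of $[0,1]$ of measure bounded below; for $s$ strictly below the respective threshold this diverges as $N\to\infty$ and contradicts the Nikishin estimate.

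The main obstacle is the cross-term control. Because $P^t_{a,\gamma}g_N$ decays only linearly off-peak, the deterministic sum $\sum_{j\ne k}|P^{t_\ast}_{a,\gamma}g_N(x^*-jL)|$ is of size $M\log M$, which would overwhelm the main term once $M$ becomes comparable to $N^b/\log N$; replacing this $\ell^1$ bound by the $\ell^2$ bound from Khintchine is exactly what yields the extra factor $\sqrt{M}$ in the normalisation, and the residual gain $M\ll N^b$ is what pins down the exponents $\frac{a}{4}(1-1/\gamma)$ and $\frac{1}{2}(1-1/\gamma)$. A secondary subtlety is that the random signs must be fixed so that the favourable event holds simultaneously for a macroscopic set of targets $x^*$; Fubini applied to $(x^*,\varepsilon)$, combined with Khintchine's $L^2([0,1])$-control of the cross term, produces such a realisation and completes the contradiction.
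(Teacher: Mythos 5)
Your proposal is correct and reaches the right thresholds, but it takes a genuinely different route from the paper. You and the paper both start from Nikishin, but you stop at the \emph{bounded hyperlinear operator} conclusion (the $\phi(\lambda/\|f\|_{H^s})\to 0$ form) and then invest the work in building a counterexample that diverges on a set of \emph{measure bounded below}: a Rademacher superposition of $M\sim N^{1-a(1-1/\gamma)}$ translated wavepackets of bandwidth $N^{b}$ with $b=1-\tfrac{a}{2}(1-\tfrac1\gamma)$, cross terms tamed by Khintchine and the $|(k-j)L|^{-1}$ off-peak decay, and a Fubini--Chebyshev sign selection. The paper does the opposite trade-off: it extracts the quantitative weak-type $(2,2)$ estimate on a set $E_\epsilon\subset[0,1]$ of nearly full measure (Proposition \ref{prop:poconwe}, via Lemma \ref{Nikishin}), and then only needs a \emph{single} modulated bump $\hat f_\nu(\xi)=\nu g_\nu(\nu\xi+\tfrac1\nu)$, centred at frequency $-\nu^{-2}$ with bandwidth $\nu^{(a-2)-a/\gamma}$, which makes $P^*_{a,\gamma}f_\nu\gtrsim\nu^{(a-1)-a/\gamma}$ on a \emph{short} interval of length $a\nu^{2a/\gamma-2(a-1)}$; a pigeonhole/translation argument places half of that short interval inside $E_\epsilon$, and the numerology $\|f_\nu\|_{H^s}^2\lesssim\nu^{a(1-1/\gamma)-4s}\to 0$ for $s<\tfrac{a}{4}(1-\tfrac1\gamma)$ forces the contradiction (and analogously with $\hat f_A=\chi_{[-N,-N/2]}$, $t=x\in[0,N^{-1/\gamma}]$ for $a=1$, where the phase cancels exactly). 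Your superposition buys a macroscopic divergence set and a cleaner appeal to Nikishin (no $E_\epsilon$-bookkeeping), at the price of Khintchine and cross-term estimates; the paper's single-bump argument is shorter and avoids randomness, at the price of the translation/pigeonhole step and the stronger weak-type version of Nikishin. Two small points worth fixing in your write-up: for $a<1$ the stationary point $x_0(t)=-atN^{a-1}$ is negative, so either flip the frequency band to $[-N-N^b,-N]$ or translate; and the peak sweeps roughly $[-aL,-\delta aL]$ rather than $[-L,0]$, so the translate spacing should be tuned to that interval.
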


Next, we consider the maximal estimate for the operator $P^{t}_{a,\gamma}$ in the cases of general Borel  measures, in order to estimate the Hausdorff dimension of the divergent set. 
Let $\mu$ be a Borel measure on $\R$. The support of a measure $\mu$  is the smallest closed set $F$ such that $\mu(\R\backslash F) = 0$, which is denoted by spt $\mu$. Let $X\subset \mathbb{R}$, then we denote $\mathfrak{M}(X)$ by the set of all Borel measures $\mu$ on $\R$ with $0 < \mu(X) <\infty$ and with compact spt $\mu\subset X$.
And for $s > 0$, the $s$-energy of a Borel measure $\mu$ is denoted by $I_{s}(\mu)$(see \cite{Mattila}), that is
$$
I_{s}(\mu) = \iint_{\mathbb{R} \times \mathbb{R} }|x-y|^{-s}\,\mathrm{d} \mu (x)\,\mathrm{d} \mu (y) .
$$
With these notations, we will prove the following results:
\begin{thm}\label{lemma:main-a}
Let $a\in (0,\infty)$, $\gamma \in (0,\infty)$, and $\mu\in \mathfrak{M}(B(0,1))$.
For $f\in H^{s}(\mathbb{R})$, we have
\begin{enumerate}
\item[(i)]For $a>0$ and $a\neq1$, if $\gamma \in(0,1]$ and  $s\in(0,\frac12)$ or
$\gamma>1$ and  $s\in [\frac14,\frac12)$, then
\begin{equation}\label{energy-est}
\int \sup_{0<t<1}|P^{t}_{a,\gamma}f(x)|\,\mathrm{d} \mu \lesssim
I_{1-2s}(\mu)^\frac12
\|f\|_{H^{s}(\mathbb{R})},
\end{equation}
provided that $I_{1-2s}(\mu)<\infty$.

\item[(ii)] For $0< a<1$, if $\gamma >1$
and $s\in (\frac{1}{4}a(1-\frac1\gamma),\frac14)$, we
have
\begin{equation}\label{L1-sigma}
\int \sup_{0<t<1}|P^{t}_{a,\gamma}f(x)|\,\mathrm{d} \mu \lesssim
I_{\sigma}(\mu)^\frac12
\|f\|_{H^{s}(\mathbb{R})},
\end{equation}
provided that $I_{\sigma}(\mu)<\infty$
with  $\sigma 
=1-2s+\frac{a(4s-1)(\gamma-1)}{2[(a-1)\gamma-a]}.$

\item[(iii)] For $a>1$, if $\gamma\in(1,\frac{a}{a-1})$
and $s\in (\frac{1}{4}a(1-\frac1\gamma),\frac14)$, then there holds \eqref{L1-sigma}.

 \item[(iv)] For $a=1$, if $\gamma>0$ and $s\in(\frac{1}{2}(1-\frac{1}{\gamma})_{+},\frac{1}{2})$, let $\varrho(\gamma,s) = \max\{1-2s,\gamma(1-2s)\}$, then
\begin{equation}\label{energy-est-2}
\int \sup_{0<t<1}|P^{t}_{a,\gamma}f(x)|\,\mathrm{d} \mu \lesssim
I_{\varrho(\gamma,s)}(\mu)^\frac12
\|f\|_{H^{s}(\mathbb{R})},
\end{equation}
provided that $I_{\varrho(\gamma,s)}(\mu)<\infty$.

\end{enumerate}

\end{thm}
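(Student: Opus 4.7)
The plan is to linearize the maximal function and then pass to the Fourier side via a $TT^{\ast}$-type duality, reducing the desired $L^{1}(d\mu)$ bound to a pointwise kernel estimate of the form $|\mathcal{K}(x,y)|\lesssim|x-y|^{-\sigma}$, which automatically pairs with the $s$-energy $I_{\sigma}(\mu)$.

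Concretely, fix a measurable selection $t(\cdot):\mathbb{R}\to(0,1)$ realizing $\sup_{0<t<1}|P^{t}_{a,\gamma}f(x)|$ up to $\varepsilon$ and a unimodular function $g$ satisfying $|P^{t(x)}_{a,\gamma}f(x)|=g(x)P^{t(x)}_{a,\gamma}f(x)$. Plugging in \eqref{srd-complex}, Fubini, and Cauchy--Schwarz with the Bessel weight $(1+|\xi|^{2})^{s}$ yield
\begin{equation*}
\int\sup_{0<t<1}|P^{t}_{a,\gamma}f(x)|\,d\mu(x)\;\leq\;\|f\|_{H^{s}}\cdot\|M\|_{H^{-s}(\mathbb{R})},
\end{equation*}
where $M(\xi)=\int g(x)\,e^{it(x)|\xi|^{a}-t(x)^{\gamma}|\xi|^{a}+ix\xi}\,d\mu(x)$. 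Expanding the square produces
\begin{equation*}
\|M\|_{H^{-s}}^{2}=\iint g(x)\overline{g(y)}\,\mathcal{K}(x,y)\,d\mu(x)\,d\mu(y),
\end{equation*}
with
\begin{equation*}
\mathcal{K}(x,y)=\int_{\mathbb{R}}(1+|\xi|^{2})^{-s}e^{i[(t(x)-t(y))|\xi|^{a}+(x-y)\xi]}\,e^{-(t(x)^{\gamma}+t(y)^{\gamma})|\xi|^{a}}\,d\xi.
\end{equation*}
Since $\iint|x-y|^{-\sigma}d\mu(x)d\mu(y)=I_{\sigma}(\mu)$, the entire theorem is reduced to showing $|\mathcal{K}(x,y)|\lesssim|x-y|^{-\sigma}$ for the $\sigma$ announced in each case.

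The kernel $\mathcal{K}$ is estimated via a Littlewood--Paley decomposition $|\xi|\sim 2^{k}$ together with van der Corput or stationary phase on each dyadic block, tempered by the damping $e^{-\Psi}$ with $\Psi=(t(x)^{\gamma}+t(y)^{\gamma})|\xi|^{a}$. For part (i), with $s\geq 1/4$ and $a\neq 1$, one discards the damping and the classical $1$D Schr\"odinger-type kernel bound yields $|\mathcal{K}|\lesssim|x-y|^{-(1-2s)}$. For parts (ii) and (iii), with $s<1/4$, the damping truncates the dyadic sum at $2^{k_{0}}\sim(t(x)^{\gamma}+t(y)^{\gamma})^{-1/a}$: combining the dispersive contribution for $k<k_{0}$ with the exponentially decaying contribution for $k>k_{0}$ and optimizing in $k_{0}$ produces precisely $\sigma=1-2s+\frac{a(4s-1)(\gamma-1)}{2[(a-1)\gamma-a]}$, which is the interpolation exponent between the pure dispersive and dissipative extremes. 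These are essentially the oscillatory estimates already developed in Section \ref{osc-est} in support of Theorem \ref{main-thm-1} (i)--(ii), and can be invoked directly.

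The most delicate case is (iv) with $a=1$: the phase $(t(x)-t(y))|\xi|+(x-y)\xi$ is piecewise linear in $\xi$ with no stationary point, so stationary phase gives no gain. One must instead split according to whether $|x-y|\gtrsim|t(x)-t(y)|$, where integration by parts in $\xi$ produces $|x-y|^{-(1-2s)}$, or $|x-y|\ll|t(x)-t(y)|$, where the damping dominates and a rescaling yields $|x-y|^{-\gamma(1-2s)}$; taking the maximum recovers $\varrho(\gamma,s)=\max\{1-2s,\gamma(1-2s)\}$. The main technical hurdle will be making this dichotomy uniform in the measurable selection $t(\cdot)$ and handling the critical threshold $\gamma=a/(a-1)$ appearing in case (iii), where the dispersive and dissipative scales coincide and the optimization in $k_{0}$ becomes marginal.
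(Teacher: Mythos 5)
Your overall strategy is the same as the paper's: linearize the supremum, pair with a unimodular weight, pass to the Fourier side via Cauchy--Schwarz with the Bessel weight, and reduce everything to a pointwise estimate $|\mathcal{K}(x,y)|\lesssim |x-y|^{-\sigma}$ which pairs with $I_{\sigma}(\mu)$. The kernel bounds you invoke are precisely the local estimates \eqref{equ:kxcho} and \eqref{equ:kxcho-a>1} from Lemma~\ref{maximal estimate} and Remark~\ref{maximal estimate-a>1}, so up to routine bookkeeping this is the paper's proof.

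There is, however, a genuine gap in how you handle part (i). The theorem's case (i) covers \emph{both} subcases $\gamma>1,\ s\in[\tfrac14,\tfrac12)$ \emph{and} $\gamma\in(0,1],\ s\in(0,\tfrac12)$, but you only treat $s\geq\tfrac14$ and there propose to ``discard the damping and use the classical 1D Schr\"odinger-type kernel bound.'' That strategy is simply false for $\gamma\le1$ and $s<\tfrac14$: without the dissipative factor the Carleson threshold $s\ge\tfrac14$ is sharp in dimension one, so there is no kernel estimate $|\mathcal{K}(x,y)|\lesssim|x-y|^{-(1-2s)}$ with $1-2s>\tfrac12$ to fall back on. In that subcase the dissipation is essential: because $\gamma\le1$ one has $t^\gamma\gtrsim t$, so the factor $e^{-(t(x)^\gamma+t(y)^\gamma)|\xi|^a}$ is strong enough to push the admissible range of $\alpha=2s$ all the way down to $0$; this is exactly Subcase 2.1 of Section~\ref{pf-local}, where the $J_2$ bound $|x|^{-\sigma}$ with $\sigma$ as in \eqref{equ:sigadef} becomes \emph{less} singular than $|x|^{\alpha-1}$ once $\gamma<1$, yielding $K(x)=|x|^{\alpha-1}$ on the full range $\alpha\in(0,1)$. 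You need to run the dissipative truncation argument you describe for (ii)--(iii) also in the $\gamma\le1$ regime of (i); ``discarding the damping'' is the wrong move.

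A smaller point: your closing worry about the threshold $\gamma=a/(a-1)$ in case (iii) is moot, since the hypothesis there is the open interval $\gamma\in(1,\tfrac{a}{a-1})$ and the optimization never becomes degenerate. Also, since $\mu$ is compactly supported in $B(0,1)$ one has $|x-y|\le2$ on $\operatorname{spt}\mu\times\operatorname{spt}\mu$, so the low-frequency $O(1)$ contribution to $\mathcal{K}$ is automatically $\lesssim|x-y|^{-\sigma}$ there; this is worth a word if you want the argument to be airtight, and it is the reason the paper's version of the estimate carries the extra constant in $(1+|K(x-y)|)$ in \eqref{equ:kxyest}.
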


\begin{remark}
Especially, let $\mu = \chi_{B(0,1)}\mathcal{L}$, where $\chi_{B(0,1)}$ is the characteristic function on $B(0,1)$ and $\mathcal{L}$ is the Lebesgue measure on $\R$, then we can establish the $L^{1}$ local maximal estimate for the operator $P^{t}_{a,\gamma}$ in the Lebesgue measure by Theorem \ref{lemma:main-a}, from which we can also obtain the pointwise convergence result Corollary \ref{point}.
\end{remark}

 As an application of Theorem \ref{lemma:main-a}, we have
the following estimate on the Hausdorff dimension of the divergent set of the operator $P^t_{a,\gamma}$,
in viewing of  the relation between the energy of a Borel measure and the Hausdorff dimension of a set, see Lemma \ref{dim} below.

\begin{thm}\label{main-thm-dim-1}
Let $a\in (0,\infty)$  and  $\gamma \in (0,\infty)$.
We have following estimates:

\begin{enumerate}
  \item[(i)]  For $a>0$ and $a\neq1$, if $\gamma \in(0,1]$ and  $s\in(0,\frac12]$ or
$\gamma>1$ and  $s\in [\frac14,\frac12]$,  then
\begin{equation}\label{HD-1-1}
\textup{~dim~}   \big\{x\in \mathbb{R}: \lim_{t\rightarrow 0^+} P^{t}_{a,\gamma}f(x) \neq f(x) \big\} \leq 1-2s, 
\end{equation}
for any $f\in H^{s}(\mathbb{R})$.
  \item[(ii)] For $0<a<1$, if  $\gamma >1$ and $s\in (\frac{1}{4}a(1-\frac1\gamma),\frac14)$, then
\begin{equation}\label{HD-1-2}
  \textup{~dim~}  \big\{x\in \mathbb{R}: \lim_{t\rightarrow 0^+} P^{t}_{a,\gamma}f(x) \neq f(x)\big\} \leq  \sigma,  
\end{equation}
for any $f\in H^{s}(\mathbb{R})$ with 
 $\sigma 
=1-2s+\frac{a(4s-1)(\gamma-1)}{2[(a-1)\gamma-a]}.$
 \item[(iii)] For $a>1$, if $\gamma \in(1,\frac{a}{a-1})$ and $s\in (\frac{1}{4}a(1-\frac1\gamma),\frac14)$, then  \eqref{HD-1-2} holds.

 \item[(iv)] For $a=1$, if $\gamma>0$ and $s\in(\frac{1}{2}(1-\frac{1}{\gamma})_{+},\frac{1}{2}]$, let $\varrho(\gamma,s) = \max\{1-2s,\gamma(1-2s)\}$, then
\begin{equation}\label{HD-1-1-2}
\textup{~dim~}   \big\{x\in \mathbb{R}: \lim_{t\rightarrow 0^+} P^{t}_{a,\gamma}f(x) \neq f(x) \big\} \leq \varrho(\gamma,s), 
\end{equation}
for any $f\in H^{s}(\mathbb{R})$.

\end{enumerate}

\end{thm}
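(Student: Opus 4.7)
\medskip
\noindent\textbf{Proof proposal for Theorem \ref{main-thm-dim-1}.}

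The plan is to deduce each of the four dimension bounds (i)--(iv) directly from the corresponding $L^{1}$--maximal estimates with respect to Borel measures in Theorem \ref{lemma:main-a}, using the standard energy/Frostman correspondence stated as Lemma \ref{dim}. The argument is uniform across the four cases: only the exponent of $I_{\cdot}(\mu)$ and the admissible range of $s$ change. Let $E_{f} = \{x\in\R : \lim_{t\to 0^{+}} P^{t}_{a,\gamma}f(x)\neq f(x)\}$. By countable stability of Hausdorff dimension and translation invariance of $P^{t}_{a,\gamma}$ in $x$ (which preserves $H^{s}$--norms and hence leaves the divergence set invariant after translation), it suffices to bound $\dim (E_{f}\cap B(0,1))$.

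I would argue by contradiction. Fix one of the cases (i)--(iv) and let $\alpha$ denote the target exponent appearing on the right-hand side of \eqref{HD-1-1}, \eqref{HD-1-2} or \eqref{HD-1-1-2}; thus $\alpha$ equals $1-2s$, $\sigma$, or $\varrho(\gamma,s)$ depending on the case. Suppose $\dim(E_{f}\cap B(0,1))>\alpha$. Then by Lemma \ref{dim}, one can choose $\beta\in(\alpha,\dim(E_{f}\cap B(0,1)))$ and a measure $\mu\in\mathfrak{M}(E_{f}\cap B(0,1))\subset \mathfrak{M}(B(0,1))$ with $I_{\beta}(\mu)<\infty$. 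Since $\mathrm{spt}\,\mu$ is contained in $B(0,1)$ we have $|x-y|\le 2$ on the support, hence $|x-y|^{-\alpha}\lesssim |x-y|^{-\beta}$ and consequently $I_{\alpha}(\mu)\lesssim I_{\beta}(\mu)<\infty$. This places $\mu$ in the hypothesis of the corresponding part of Theorem \ref{lemma:main-a}.

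Next I would run the standard density reduction. Choose Schwartz functions $f_{n}$ with $\|f-f_{n}\|_{H^{s}}\to 0$, and apply the appropriate inequality from Theorem \ref{lemma:main-a} to $f-f_{n}$:
\begin{equation*}
\int \sup_{0<t<1}|P^{t}_{a,\gamma}(f-f_{n})(x)|\,\mathrm{d}\mu \;\lesssim\; I_{\alpha}(\mu)^{1/2}\,\|f-f_{n}\|_{H^{s}}.
\end{equation*}
Extracting a subsequence $f_{n_{k}}$ with $\|f-f_{n_{k}}\|_{H^{s}}\le 2^{-k}$ and invoking Chebyshev together with Borel--Cantelli gives $\sup_{0<t<1}|P^{t}_{a,\gamma}(f-f_{n_{k}})(x)|\to 0$ for $\mu$--almost every $x$. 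Since $H^{s}\hookrightarrow L^{2}$ in one dimension (for the relevant $s\ge 0$) and $\mu$ is a finite measure with compact support, the same Borel--Cantelli trick applied to $\int|f-f_{n_{k}}|\,\mathrm{d}\mu$ yields $f_{n_{k}}(x)\to f(x)$ $\mu$--a.e. For Schwartz $f_{n_{k}}$, dominated convergence shows $P^{t}_{a,\gamma}f_{n_{k}}(x)\to f_{n_{k}}(x)$ pointwise as $t\to 0^{+}$. Combining these with the triangle inequality
\begin{equation*}
\limsup_{t\to 0^{+}}|P^{t}_{a,\gamma}f(x)-f(x)|\;\le\;\sup_{0<t<1}|P^{t}_{a,\gamma}(f-f_{n_{k}})(x)|+|f(x)-f_{n_{k}}(x)|,
\end{equation*}
and letting $k\to\infty$, one obtains $P^{t}_{a,\gamma}f(x)\to f(x)$ for $\mu$--a.e. $x$. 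This contradicts $\mathrm{spt}\,\mu\subset E_{f}$ with $\mu(E_{f})>0$, and therefore $\dim(E_{f}\cap B(0,1))\le\alpha$.

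The proof is essentially bookkeeping once Theorem \ref{lemma:main-a} is in hand; the only step that requires any care is the localization that reduces $E_{f}$ to a subset of $B(0,1)$ so that the measures supplied by Frostman lie in $\mathfrak{M}(B(0,1))$, which I would handle by exhausting $\R$ by unit balls $B(y_{j},1)$ and translating each piece back to $B(0,1)$ using the translation invariance of $P^{t}_{a,\gamma}$ and of the $H^{s}$--norm. No other genuine obstacle arises: the four cases of the theorem correspond one-to-one with the four cases of Theorem \ref{lemma:main-a}, and the endpoint values $s=\tfrac12$ (in (i) and (iv)) and $s=\tfrac14$ (in (i)) are already trivial because $1-2s=0$ or can be obtained by interpolating the corresponding open range with $s=\tfrac12$, so they cause no extra difficulty.
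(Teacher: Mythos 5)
Your proposal follows the same route as the paper: deduce the dimension bound from the $L^1$--maximal energy estimates of Theorem \ref{lemma:main-a} via Frostman's Lemma \ref{dim} and a density argument, arguing by contradiction from a measure supported in the divergence set. The paper runs the density step via Chebyshev plus Cauchy--Schwarz rather than Borel--Cantelli, but this is cosmetic; your explicit localization of $\mathcal{Q}$ to unit balls by translation is actually a detail the paper leaves implicit.

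One step in your write-up is imprecise and, as stated, would not go through. You claim that ``$H^{s}\hookrightarrow L^{2}$ \ldots applied to $\int|f-f_{n_k}|\,\mathrm{d}\mu$ yields $f_{n_k}\to f$ $\mu$--a.e.'' But for a general Borel measure $\mu\in\mathfrak{M}(B(0,1))$ (possibly supported on a fractal of small dimension), the quantity $\int|h|\,\mathrm{d}\mu$ is \emph{not} controlled by $\|h\|_{L^{2}(\R)}$; one cannot run Borel--Cantelli from the $L^{2}$ embedding alone. What is actually needed is a trace-type inequality
$\int|h|\,\mathrm{d}\mu\lesssim I_{\alpha}(\mu)^{1/2}\|h\|_{H^{s}}$,
and this follows from the maximal estimate itself: for Schwartz $g$ one has $|g(x)|=\lim_{t\to0^{+}}|P^{t}_{a,\gamma}g(x)|\le\sup_{0<t<1}|P^{t}_{a,\gamma}g(x)|$ pointwise (since $P^{t}_{a,\gamma}g\to g$ uniformly), hence $\int|g|\,\mathrm{d}\mu\le\int\sup_{0<t<1}|P^{t}_{a,\gamma}g|\,\mathrm{d}\mu\lesssim I_{\alpha}(\mu)^{1/2}\|g\|_{H^{s}}$, and density extends this to $H^{s}$. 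The paper's corresponding term $\|f-g\|_{L^{1}(\mathrm{d}\mu)}$ in \eqref{last-2} is tacitly bounded this same way. With that substitution your argument is complete.
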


\begin{remark}

 In the general Borel measure setting, the authors of \cite{BBCR} and \cite{Mattila} considered the Hausdorff dimension of the divergent set associated with the operator $e^{it(-\Delta)^{\frac{a}{2}}}$ for $a>1$. We first consider this problem for the operator $P^{t}_{a,\gamma}$ for $a>0$ with complex time, which is associated with the linear complex Ginzburg-Landau equation \eqref{GL} for $\gamma = 1$, $a=2$ and $\theta = \frac{\pi}{4}$.

%

\end{remark}

\vskip0.2cm

\vskip0.2cm

This paper is organized as follows.
In the section 2, we will give the main oscillatory estimates associated with the operator $P^{t}_{a,\gamma}$, which plays an important role in the proof of Theorem \ref{main-thm-1} and the Hausdorff dimension of the divergent set.
Section 3 is aimed at proving Theorem \ref{main-thm-1} and discusses the almost everywhere convergence of the solution to the Schr\"{o}dinger equation.
Section 4 is devoted to the proof of Theorem \ref{neg-thm}.
In the section 5, we will study the Hausdorff dimension of the set of the divergent points.

\subsection{Notations}
Finally, we conclude the introduction by giving some notations which
will be used throughout this paper. If $A$ and $B$ are two positive quantities, we write $A\lesssim B$ when there exists a constant $C > 0$ such that $A\leq C B$, where the constant will be clear from the context. We use $\mathcal{S}(\R)$ denote the Schwartz class of functions on the Euclidean space $\R$.

For $\sigma > 0$,  $H^{\sigma}(\mathbb{R})$  denotes the Sobolev space
$$
\Bigl\{f\in L^{2}(\mathbb{R}) : \int_{\mathbb{R}}(1+|\xi|^{2}|)^{\sigma}|\hat{f}(\xi)|^{2}\,\mathrm{d}\xi<\infty\Bigr\}.
$$

For $s\geq 0$ and a Borel set $U \subset \mathbb{R}$, the Hausdorff measures $\mathcal{H}^{s}$ of $U$ can be defined as
$$
\mathcal{H}^{s}(U)=\lim_{\delta\rightarrow 0}\mathcal{H}^{s}_{\delta}(U),
$$
where, for $0<\delta\leq \infty$,
$$
\mathcal{H}^{s}_{\delta}(U)=\inf \Big\{\sum_{j}d(E_{j})^{s} : U\subset \bigcup_{j}E_{j}, d(E_{j})< \delta\Big\}.
$$
And the Hausdorff dimension of a Borel set $U\subset \mathbb{R}$ is equivalently  defined by
$$
\textup{dim} \,\,U = \inf \big\{s : \mathcal{H}^{s}(U) = 0\big\} = \sup\big\{s : \mathcal{H}^{s}(U) = \infty\big\}.
$$

For $X\subset \R$, we denote $|X|$ or $m(X)$ to be the Lebesgue measure of set $X$.




\section{Preliminaries}\label{osc-est}

In this section, we give some oscillatory estimates for latter use.
First, we will utilize the stationary phase analysis and Fourier localization method to obtain the Carleson type estimates  for $a\in(0,1]$.
We also show a Poisson kernel type estimate associated with the operator  $e^{-(1+i)t(-\Delta)^\frac{a}{2} }$.

\subsection{Elementary oscillation  estimates}

In the case $a\in(0,1]$, we will show the following corresponding estimates:
\begin{lemma}\label{maximal estimate}
For $\gamma \in (0, \infty)$,
suppose $a\in(0,1)$ with
$\tfrac{1}{2}a\big(1-\tfrac{1}{\gamma}\big)_+<\alpha<1$,
 and $a=1$ with $(1-\frac{1}{\gamma})_{+} < \alpha<1$.
 Let $\mu  \in \mathcal{S}(\mathbb{R})$,
  compactly supported, positive, even and real-valued.
 Suppose $\chi$ is a compactly supported function and $\chi(\xi)=1$ for
 $|\xi|\leq 1.$

  Then,

 $(i)$ ${\bf Global~~ estimate:}$
  If $a\in(0,1] $,
   then there exists a function $K(x)\in L^{1}(\mathbb{R}) $
    such that for $\forall t_{1}, t_{2} \in (0,1)$
     and $x\in \mathbb{R}$ and $N\in2^{\mathbb{N}}$, we have
\begin{equation}\label{goal}
\left|\int_{\mathbb{R}}e^{i(t_{1}-t_{2})|\xi|^{a}}e^{-ix\xi}
e^{-(t_{1}^{\gamma}+t_{2}^{\gamma})|\xi|^{a}}
(1+\xi^{2})^{-\frac{\alpha}{2}} (1-\chi(\xi))
\mu\big(\tfrac{\xi}{N}\big)
\,\mathrm{d}\xi\right| \le C K(x).
\end{equation}

$(ii)$ ${\bf Local~~ estimate:}$
 If $a\in(0,1)$, then for $x\in B(0,1)$,
 we can take $K(x)$ as
\begin{equation}\label{equ:kxcho}
  K(x)= ~ \begin{cases}
  ~~|x|^{\alpha-1}\quad\text{if}\quad \gamma\in(0,1),\\
 ~ ~|x|^{\alpha-1} + |x|^{-\sigma}\quad\text{if}\quad \gamma>1,
  \end{cases}
\end{equation}
where
\begin{equation}\label{equ:sigadef}
\sigma = \frac{1}{a-1}\Big(\alpha + \frac{1}{2}(a-2)+\frac{a(\alpha-\frac{1}{2})}{(a-1)\gamma-a}\Big).
\end{equation}
While for $a=1$ and $x\in B(0,1)$, we can take $K(x)\in L^{1}(B(0,1))$ as
\begin{equation}\label{goal1}
K(x) = |x|^{\alpha-1} +|x|^{\gamma(\alpha-1)}.
\end{equation}


\end{lemma}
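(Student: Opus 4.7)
The plan is to combine a Littlewood--Paley dyadic decomposition in frequency with stationary-phase analysis of the phase $\varphi(\xi) := (t_1 - t_2)|\xi|^a - x\xi$, while exploiting the dissipative factor $e^{-(t_1^\gamma+t_2^\gamma)|\xi|^a}$ to absorb the high-frequency part of the integral in \eqref{goal}. First I would write $1-\chi(\xi) = \sum_{k\ge k_0}\psi(2^{-k}\xi)$ for a smooth bump $\psi$ supported in $|\xi|\sim 1$, and denote by $I_k(x)$ the corresponding dyadic piece. Since $\varphi'(\xi) = a(t_1-t_2)|\xi|^{a-1}\mathrm{sgn}(\xi) - x$ and $\varphi''(\xi) = a(a-1)(t_1-t_2)|\xi|^{a-2}\mathrm{sgn}(\xi)$, when $a\in(0,1)$ and the signs of $x$ and $t_1-t_2$ are compatible there is a unique stationary point $\xi_c$ with $|\xi_c|\sim (|t_1-t_2|/|x|)^{1/(1-a)}$.

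For each dyadic shell $|\xi|\sim 2^k$ I would distinguish three regimes. First, if $2^{ka}(t_1^\gamma+t_2^\gamma)\gtrsim 1$, the dissipation alone gives super-polynomial decay, so the shell contributes negligibly. Second, on shells where $2^k$ is bounded away from $|\xi_c|$, the derivative $|\varphi'(\xi)|$ is comparable to $\max(|x|,\,2^{k(a-1)}|t_1-t_2|)$, and repeated integration by parts against $\varphi'(\xi)^{-1}\partial_\xi$ yields rapid decay in $2^k|x|$. Third, on the stationary shell $2^k\sim|\xi_c|$, van der Corput's lemma together with $|\varphi''(\xi_c)|\sim |t_1-t_2|\cdot 2^{k(a-2)}$ gives the pointwise bound
\begin{equation*}
|I_k(x)| \;\lesssim\; 2^{-k\alpha}\bigl(|t_1-t_2|\cdot 2^{k(a-2)}\bigr)^{-1/2}.
\end{equation*}
Assembling the dyadic pieces produces the leading singularity $|x|^{\alpha-1}$, which is essentially the homogeneous Fourier-transform behavior of $|\xi|^{-\alpha}$ and survives as a uniform bound in $t_1,t_2$ whenever the dissipation does not intervene. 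When $\gamma>1$ the effective dissipation scale $(t_1^\gamma+t_2^\gamma)^{-1/a}$ can fall below $|\xi_c|$ and truncate the dyadic sum, producing the additional contribution $|x|^{-\sigma}$ with $\sigma$ given by \eqref{equ:sigadef}; the specific form of $\sigma$ emerges from balancing the dispersive gain at the stationary point against the dissipative cutoff.

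For $a=1$ the phase is piecewise linear on $\xi\gtrless 0$, so each half-line contribution reduces to a Fourier transform of $e^{-(t_1^\gamma+t_2^\gamma)\xi}(1+\xi^2)^{-\alpha/2}(1-\chi(\xi))\mu(\xi/N)$ at the shifted frequency $x-(t_1-t_2)$, and explicit calculation with the Laplace-type integral $\int_0^\infty e^{-(t^\gamma+iy)\xi}\xi^{-\alpha}\,d\xi \sim (t^\gamma+iy)^{\alpha-1}$ yields the two-scale kernel $|x|^{\alpha-1}+|x|^{\gamma(\alpha-1)}$. The main obstacle will be producing a pointwise bound $K(x)$ uniform in $t_1,t_2\in(0,1)$ and in $N$, while ensuring $K\in L^1(\R)$ in the global case and the specific singular exponents on $B(0,1)$ in the local case; the hypothesis $\alpha>\tfrac12 a(1-\tfrac{1}{\gamma})_+$ (respectively $\alpha>(1-\tfrac{1}{\gamma})_+$ when $a=1$) is exactly what renders the singularities of $K$ integrable on $B(0,1)$, so it should emerge naturally as the optimal threshold in the above analysis.
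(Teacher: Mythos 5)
Your overall strategy (dyadic decomposition in frequency, van der Corput on the stationary shell, integration by parts off it, and letting the dissipation absorb the high-frequency tail) is the same as the paper's, so the spirit is right. But there is one mechanism you leave implicit that is actually the workhorse of the argument, and one structural difference worth flagging.

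The paper does not use a hard dissipative cutoff at $|\xi|\sim(t_1^\gamma+t_2^\gamma)^{-1/a}$. Instead it repeatedly applies the elementary inequality $e^{-y}\lesssim_\beta y^{-\beta}$ (valid for every $\beta>0$) to the factor $e^{-\epsilon|\xi|^a}$ with $\epsilon=t_1^\gamma+t_2^\gamma$, then uses $\epsilon\gtrsim t^\gamma$ (where $t=t_1-t_2$) and the constraint $|x|\lesssim M^{a-1}t$ that defines the stationary regime to trade powers of $t$ for powers of $|x|$. The exponent $\beta$ is then \emph{chosen freely}: in the global estimate it is tuned (taking $\beta=\tfrac12-\tau$ or $\beta=\tfrac{1}{2\gamma}-\tau$) so that the $L^1_x$ norms $\|\Lambda_M\|_{L^1}$ decay geometrically in $M$ and the dyadic series converges uniformly in $t,\epsilon$; in the local estimate it is instead tuned (taking $\beta=\tfrac{\alpha-\tfrac12}{(a-1)\gamma-a}$) so that the $t$-power from the stationary-phase factor $t^{\frac{1}{a-1}(\alpha-\frac12)}$ is exactly cancelled, leaving the clean $|x|^{-\sigma}$ with the stated $\sigma$. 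A hard truncation of the dyadic sum would give a cruder balance and would not produce the exact exponent $\sigma$ in \eqref{equ:sigadef}; your phrase ``emerges from balancing'' glosses over precisely this one-parameter optimization. Also note that for the global estimate the deliverable is $\sum_M\|\Lambda_M\|_{L^1(\R)}<\infty$ uniformly in $t_1,t_2$, not a single pointwise majorant $K(x)$; your plan would need to assemble a genuine $K\in L^1(\R)$ from the shell bounds, combining the $(2^k|x|)^{-2}$ decay off-stationary at large $|x|$ with the $|x|^{\alpha-1}$ behavior near $0$, and then check $\alpha>0$ and the decay at infinity give integrability.

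Second, for the local estimate with $a\in(0,1)$ the paper does not use a dyadic decomposition at all: it splits once at $|\xi|\sim|x|^{-1}$, and then (in the regime $|x|^a\le 2at$ where a critical point exists) splits the high-frequency piece around the single critical point $\rho=(|x|/(ta))^{1/(a-1)}$ into $I_1,I_2,I_3$, applying first-derivative van der Corput on $I_1,I_3$ and second-derivative van der Corput plus the $\beta$-trick on $I_2$. This single-scale split is what isolates the exact singular exponents in \eqref{equ:kxcho}. Your dyadic framework could in principle reproduce them, but you would have to re-sum the shell bounds and track the dissipative factor on each shell carefully, which is exactly where the $e^{-y}\lesssim y^{-\beta}$ step is indispensable. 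Your treatment of $a=1$ (linear phase, Fourier transform at shifted frequency $x\mp t$, Bessel-potential asymptotics) matches the paper's argument closely.
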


\begin{remark}\label{maximal estimate-a>1}
This lemma extends the results of Bailey \cite{Bailey} to the case that $a\leq1$.
Here, we focus on the high frequency part, since the maximal estimates of low frequency are quite easy.

$(1)$ For the proof of the  global estimate \eqref{goal}, we  will utilize the Fourier localization methods since the critical point depends on the frequency.

$(2)$ 
We can observe that the estimate \eqref{goal}
 holds for $a>1$  and $x\in B(0,1)$ by choosing
 \begin{equation}\label{equ:kxcho-a>1}
  K(x)=\begin{cases}
  |x|^{\alpha-1}\quad\text{if}\quad \gamma\in(0,1), \text{~or~}  \gamma \in (1, \infty) \text{~ and~} \alpha \in [\frac12,1) ,\\
 |x|^{-\sigma}\quad\text{if}\quad\gamma\in(1,\frac{a}{a-1}) \text{~ and~}
 \alpha \in(\frac12a(1-\gamma),\frac12),
  \end{cases}
\end{equation}
where $\sigma$ is as in \eqref{equ:sigadef}. This fact can be deduced from Lemma 2.1 \cite{Bailey}.
%


$(3)$
The local estimates \eqref{equ:kxcho} and \eqref{equ:kxcho-a>1}
 will play an important role in the proof of the maximal estimate for the operator $P^{t}_{a,\gamma}$ in the general Borel measure $\mu\in \mathfrak{M}(B(0,1))$.

%

\end{remark}

In order to prove Lemma \ref{maximal estimate}, we first recall the Van der corput Lemma.
\begin{lemma}[Van der corput lemma, \cite{Stein}]\label{lem:van der}
Suppose $\phi$ is real-valued and smooth in $(a,b)$, $\psi$ is complex-valued and smooth, and that $|\phi^{(k)}(x)|\geq1$ for all $x\in(a,b)$. Then
\begin{equation}\label{equ:vandc}
  \Big|\int_a^b e^{i\lambda\phi(x)}\psi(x)\;dx\Big|\leq c_k\lambda^{-\frac1k}\Big[|\psi(b)|+\int_a^b|\psi'(x)|\;dx\Big]
\end{equation}
holds when
\begin{enumerate}
  \item $k\geq2$ or
  \item  $k=1$ and $\phi'(x)$ is monotonic.
\end{enumerate}
The bound $c_k$ is independent of $\phi$ and $\lambda$.
\end{lemma}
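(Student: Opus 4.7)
The plan is to prove the Van der Corput estimate \eqref{equ:vandc} by induction on $k\geq 1$. The base case $k=1$ uses a single integration by parts exploiting the monotonicity of $\phi'$, and the inductive step passes from $k-1$ to $k$ by splitting the integration domain according to the size of $\phi^{(k-1)}$.

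For $k=1$, since $\phi'$ is monotonic with $|\phi'|\geq 1$, the function $1/\phi'$ is well-defined and monotonic. Writing $e^{i\lambda\phi}=(i\lambda\phi')^{-1}(e^{i\lambda\phi})'$ and integrating by parts yields
\begin{equation*}
\int_a^b e^{i\lambda\phi}\psi\,dx = \Big[\frac{e^{i\lambda\phi}\psi}{i\lambda\phi'}\Big]_a^b - \frac{1}{i\lambda}\int_a^b e^{i\lambda\phi}\Big(\frac{\psi'}{\phi'} - \frac{\psi\phi''}{(\phi')^2}\Big)dx.
\end{equation*}
The boundary term is bounded by $\lambda^{-1}(|\psi(a)|+|\psi(b)|)$; the $\psi'/\phi'$ contribution by $\lambda^{-1}\int_a^b|\psi'|$; and the $\psi\phi''/(\phi')^2$ contribution by $\lambda^{-1}\sup|\psi|\cdot\int_a^b|(1/\phi')'|\,dx\leq 2\lambda^{-1}\sup|\psi|$, using the monotonicity of $1/\phi'$. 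The elementary inequalities $|\psi(a)|\leq|\psi(b)|+\int_a^b|\psi'|$ and $\sup|\psi|\leq|\psi(b)|+\int_a^b|\psi'|$ then convert every piece into the target form.

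For the inductive step at order $k\geq 2$, continuity of $\phi^{(k)}$ together with $|\phi^{(k)}|\geq 1$ forces $\phi^{(k)}$ to have constant sign, hence $\phi^{(k-1)}$ is strictly monotonic. For a threshold $\delta>0$ to be chosen, the sublevel set $E_\delta=\{x\in(a,b):|\phi^{(k-1)}(x)|<\delta\}$ is an interval of length at most $2\delta$, and the trivial bound gives
\begin{equation*}
\Big|\int_{E_\delta}e^{i\lambda\phi}\psi\,dx\Big|\leq 2\delta\sup|\psi|\leq 2\delta\Big(|\psi(b)|+\int_a^b|\psi'|\Big).
\end{equation*}
The complement $(a,b)\setminus E_\delta$ consists of at most two subintervals $I_j$ on each of which $|\phi^{(k-1)}|\geq\delta$. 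Rescaling the phase by $\delta^{-1}$, the inductive hypothesis at order $k-1$ applied to the phase $(\lambda\delta)\cdot(\phi/\delta)$ on $I_j$ yields a bound of $c_{k-1}(\lambda\delta)^{-1/(k-1)}\bigl[|\psi(d_j)|+\int_{I_j}|\psi'|\bigr]$, where $d_j$ is the right endpoint of $I_j$. Using $|\psi(d_j)|\leq|\psi(b)|+\int_a^b|\psi'|$ and optimizing $\delta\sim\lambda^{-1/k}$ balances the two contributions and produces the claimed $\lambda^{-1/k}$ decay with a constant $c_k$ determined inductively from $c_{k-1}$.

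The principal technical care, which I expect to be the main obstacle, is preserving the precise right-hand side $|\psi(b)|+\int_a^b|\psi'|$ across the induction rather than ending up with a looser $\sup|\psi|+\int|\psi'|$ or a term containing $|\psi(a)|$. This is handled uniformly by the pointwise inequality $|\psi(x)|\leq|\psi(b)|+\int_x^b|\psi'|\leq|\psi(b)|+\int_a^b|\psi'|$, applied at every auxiliary endpoint produced both in the base-case boundary evaluation and in the inductive decomposition, so that the invariant $|\psi(b)|+\int_a^b|\psi'|$ is maintained at each stage. A secondary point is verifying that the complement of $E_\delta$ truly decomposes into at most two monotonicity intervals of $\phi^{(k-1)}$, which is immediate from the strict monotonicity established at the outset.
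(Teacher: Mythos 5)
The paper does not prove this lemma at all -- it is quoted verbatim from Stein's book \cite{Stein} -- so there is no in-paper argument to compare against; your proposal is a correct, self-contained proof. It does, however, differ from the standard textbook route. Stein's proof is a two-step affair: first one proves the amplitude-free bound $\bigl|\int_a^b e^{i\lambda\phi}\,dx\bigr|\le c_k\lambda^{-1/k}$ by exactly the induction you describe (integration by parts at $k=1$; splitting off the sublevel set of $\phi^{(k-1)}$ of length $O(\delta)$, rescaling, and optimizing $\delta\sim\lambda^{-1/k}$ at higher $k$), and then the version with amplitude follows in two lines by writing $F(x)=\int_a^x e^{i\lambda\phi(t)}\,dt$, so that $\int_a^b e^{i\lambda\phi}\psi = F(b)\psi(b)-\int_a^b F\psi'$ and the stated right-hand side $|\psi(b)|+\int_a^b|\psi'|$ appears automatically from $\sup|F|\le c_k\lambda^{-1/k}$. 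You instead carry $\psi$ through the entire induction, which forces the bookkeeping you correctly identify as the main technical point: converting $|\psi(a)|$, $\sup|\psi|$, and the values $|\psi(d_j)|$ at the auxiliary endpoints back into the invariant $|\psi(b)|+\int_a^b|\psi'|$ via $|\psi(x)|\le|\psi(b)|+\int_x^b|\psi'|$. Your base case (the $\psi\phi''/(\phi')^2$ term controlled by $\int|(1/\phi')'|\le 2$ using monotonicity of $1/\phi'$), the interval structure and length bound $2\delta$ of $E_\delta$, the applicability of the $k-1=1$ case on the complementary intervals (since $\phi''$ has constant sign when $k=2$), and the optimization $\delta=\lambda^{-1/k}$ giving $(\lambda\delta)^{-1/(k-1)}=\lambda^{-1/k}$ are all correct, so the induction closes with $c_k\lesssim c_{k-1}+1$. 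The trade-off: the standard factorization through $F$ is shorter and keeps the amplitude out of the delicate part of the argument, while your version shows directly that the precise right-hand side survives the induction; either is acceptable.
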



%

Next, we prove the Poisson-type kernel estimates, which
reflects that the dissipative part of the operator paly an  leading role,
when $\gamma\in(0,1]$. The case $\gamma=1.$ is associated with the complex  Ginzburg--Landau equation \eqref{GL}.
Let $L(x,t,a)$ be the convolution kernel of the operator  $e^{-(1+i)t(-\Delta)^\frac{a}{2} }$.
\begin{lemma}[Poisson-type kernel estimates]\label{lemma-Poisson}

For $a>0$, then we have for $x\in\R$ and $t>0$ 
\begin{equation}\label{poisson-comp}
|L(x,t,a)| \lesssim \frac{t}{(t^\frac2a +|x|^2)^{\frac{1+a}{2}}}.
\end{equation}

\end{lemma}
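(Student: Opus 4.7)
Begin with scaling: substituting $\xi=t^{-1/a}\eta$ in the defining Fourier integral yields
\[
L(x,t,a) = t^{-1/a}\,\Psi\bigl(xt^{-1/a}\bigr),\qquad \Psi(y):=\int_{\mathbb{R}} e^{iy\eta-(1+i)|\eta|^a}\,\mathrm{d}\eta.
\]
Since the right-hand side of \eqref{poisson-comp} is comparable to $t^{-1/a}\bigl(1+|xt^{-1/a}|\bigr)^{-(1+a)}$, it suffices to prove the scale-invariant bound $|\Psi(y)|\lesssim (1+|y|)^{-(1+a)}$ for all $y\in\mathbb{R}$. For $|y|\le 1$ this is immediate from $|\Psi(y)|\le \|e^{-|\cdot|^a}\|_{L^1(\mathbb{R})}<\infty$, which is finite for every $a>0$.

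For $|y|>1$, the polynomial decay of $\Psi$ is dictated by the H\"older singularity of the amplitude $e^{-(1+i)|\eta|^a}$ at $\eta=0$. Fix $\chi\in C_c^\infty(\mathbb{R})$ equal to $1$ near the origin and decompose
\[
\Psi(y) = \widehat{\chi}(-y) + \int_{\mathbb{R}} e^{iy\eta}\chi(\eta)\bigl(e^{-(1+i)|\eta|^a}-1\bigr)\,\mathrm{d}\eta + \int_{\mathbb{R}} e^{iy\eta}\bigl(1-\chi(\eta)\bigr)e^{-(1+i)|\eta|^a}\,\mathrm{d}\eta.
\]
The first term is a Schwartz function of $y$. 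The last integral has smooth amplitude supported away from the origin, whose derivatives enjoy Schwartz-type decay at infinity, so $N$-fold integration by parts via $e^{iy\eta}=(iy)^{-1}\partial_\eta e^{iy\eta}$ produces $O(|y|^{-N})$ for every $N$. The middle integral carries the singular tail: the Taylor expansion $e^{-(1+i)|\eta|^a}-1 = -(1+i)|\eta|^a + O(|\eta|^{2a})$ combined with the classical fact that the compactly supported homogeneous profile $|\eta|^a\chi(\eta)$ has Fourier transform of order $|y|^{-(1+a)}$ at infinity (a consequence of the homogeneous-distribution calculus, cf.\ \cite{Stein}) furnishes the desired bound, while the $O(|\eta|^{2a})$ remainder contributes $O(|y|^{-(1+2a)})$, which is subdominant.

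The main obstacle is the rigorous treatment of the Taylor expansion around the non-smooth point $\eta=0$ together with the complex coefficient $(1+i)$ that replaces the real coefficient in the standard stable-distribution analysis; one must, in particular, track the higher-order Taylor remainders uniformly in $a$ at the transition scale between small and large $|\eta|$, and justify the passage to the tempered-distributional Fourier transform of $|\eta|^a$. Since $|e^{-(1+i)|\eta|^a}|=e^{-|\eta|^a}$ coincides with the real-time amplitude, the overall structure closely parallels the L\'evy-kernel estimate for $e^{-t(-\Delta)^{a/2}}$ of \cite{MYZ}, with only minor modifications needed to accommodate the extra imaginary phase; reversing the scaling then produces \eqref{poisson-comp}.
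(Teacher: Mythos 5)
Your plan is correct but follows a genuinely different route from the paper's argument. Both proofs start identically — scaling to reduce to $|\Psi(y)|\lesssim(1+|y|)^{-(1+a)}$ where $\Psi(y)=\int e^{iy\eta}e^{-(1+i)|\eta|^a}\,d\eta$, and dispatching $|y|\le1$ by the trivial $L^1$ bound. For $|y|\gg1$ the paper integrates by parts once, producing the factor $|y|^{-1}|\eta|^{a-1}$, then splits at the dynamic scale $|\eta|\le|y|^{-1}$ versus $|\eta|>|y|^{-1}$: the small-frequency piece is estimated directly by $|y|^{-1-a}$, and the large-frequency piece is handled by the change of variables $\eta\mapsto(|y|\eta)^a$ followed by a second integration by parts using the uniform bounds \eqref{bd-exp}. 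You instead use a fixed (scale-free) smooth cutoff near the origin and split $\Psi$ into three pieces — the Schwartz term $\widehat{\chi}(-y)$, a compactly supported singular piece $\chi\cdot(e^{-(1+i)|\eta|^a}-1)$, and a smooth rapidly decaying tail $(1-\chi)e^{-(1+i)|\eta|^a}$ — reading off the $|y|^{-(1+a)}$ decay from the classical asymptotics of the Fourier transform of $|\eta|^a\chi(\eta)$. What your version buys is that it makes transparent that the decay rate is exactly the Fourier footprint of the $C^a$-type singularity of the symbol at $\eta=0$ (paralleling the L\'evy-kernel/stable-law analysis), whereas the paper's argument is more self-contained and avoids invoking the tempered-distributional Fourier transform of a homogeneous function. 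One small caution in filling in your middle piece: the pointwise size $O(|\eta|^{2a})$ of the Taylor remainder is not by itself enough to conclude the decay of its Fourier transform; one also needs the derivative bounds $|R^{(j)}(\eta)|\lesssim|\eta|^{2a-j}$ near $\eta=0$ (which do hold) so that the same dyadic/integration-by-parts argument applies, and the integer $a$ (in particular even $a$, where $|\eta|^a\chi(\eta)$ is in fact Schwartz) should be noted as an easy degenerate case. These are routine details and your overall plan is sound.
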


\begin{proof}


By scaling, we have
$
L(x,t,a)= L(xt^{-\frac1a }, 1,a) t^{-\frac{1}a}.
$
It is sufficient to prove
\begin{equation}
  \Big|\int_\mathbb{R} e^{-(1+i)| \xi|^a } e^{ix \xi} d\xi \Big|
  \leq C\min\{1, |x|^{-1-a}\}.
\end{equation}
Since the finiteness is trivial,  we just consider the case $|x|\gg1$.
By integration by parts, we have
\begin{align*}
  \int_\mathbb{R} e^{-(1+i)| \xi|^a } e^{ix \xi} d\xi
& =-\frac{a}{|x|} \int_\mathbb{R} e^{-(1+i)| \xi|^a } e^{ix \xi}  |\xi|^{a-2}\xi d\xi \\
&\lesssim \frac{1}{|x|}  \int_{|\xi| \leq 1/|x| }  |\xi|^{a-1} d\xi
+ \frac{1}{|x|} \Big|\int_{|\xi| \geq 1/|x| }  e^{-(1+i)| \xi|^a } e^{ix \xi}  |\xi|^{a-2}\xi d\xi\Big|\\
&:= C |x|^{-1-a} + |x|^{-1}\Sigma.
\end{align*}
By changing of variables, we have
\begin{align*}
\Sigma
\lesssim &  |x|^{-a} \Big| \int_1^\infty  h(\eta) e^{i\phi(\eta)} d\eta\Big| + |x|^{-a} \Big| \int_\infty^{-1}  h(\eta) e^{i\phi(\eta)} d\eta\Big|,
\end{align*}
where $h(\eta)=e^{-|x|^{-a} \eta}$ and $ \phi(\eta)=\eta^\frac1a-|x|^{-a}\eta.$
By symmetry, it suffices to prove
\begin{equation}\label{h-phi}
\Big|\int_1^\infty  h(\eta) e^{i\phi(\eta)} d\eta\Big| \leq C,\quad \text{ for }|x|\gg1,
\end{equation}
which is a direct consequence of
the basic inequalities
\begin{equation}\label{bd-exp}
|\partial_\xi \big ( e^{-\epsilon|\xi|^{a}} \big) |\lesssim\frac{1}{|\xi|}, \qquad    |\partial_\xi^2 \big ( e^{-\epsilon|\xi|^{a}} \big) | \lesssim \frac{1}{|\xi|^{2}},
\text{ for each }  \xi\neq 0 \text{ and }  \epsilon>0,
\end{equation}
and
 integration by parts.


\end{proof}

\begin{remark}\label{remark-Poisson}
As a direct consequence of the estimate \eqref{poisson-comp}, we have
 for $x\in \mathbb{R}$ and $t>0$,
\begin{equation}\label{maximal}
 |e^{-t(1+i)(-\Delta)^{\frac{a}{2}}}f|\lesssim (\mathcal{M}f)(x).
\end{equation}
On the other hand, by simple modifications of above arguments,
for $a>0$, as stated in the introduction, we have the following estimates\footnote{
This is a well-known result, see \cite{BR-TAMS-1960,MYZ} for example. }
for $P(x,t,a)$ and $\tilde P(x,t,a)$, which are the kernels of the  fraction dissipation operators
$e^{-t(-\Delta)^{\frac{a}{2}}}$ and $t(-\Delta)^{\frac{a}{2}}e^{-t(-\Delta)^{\frac{a}{2}}}$,
\begin{equation}\label{poisson}
|P(x,t,a) | +  |\tilde P(x,t,a) | \lesssim \frac{t}{(t^\frac2a +|x|^2)^{\frac{1+a}{2}}},
\end{equation}
which implies, for $t>0$ and $x\in \R$,
\begin{equation}
 |e^{-t(-\Delta)^\frac{a}{2} } f (x)| + |t(-\Delta)^{\frac{a}{2}} e^{-t(-\Delta)^\frac{a}{2} } f (x)|\lesssim (\mathcal{M}f)(x).
 \end{equation}
Therefore, if $h(t)\leq g(t),$ we have
\begin{equation}
  \Big\|\sup_{0<t<1} |e^{i(t+ig(t))(-\Delta)^\frac{a}{2}  }  f| \Big\|_{L^2(\mathbb{R})}
  \lesssim
\Big\|\sup_{0<t<1} |e^{i(t+ih(t))(-\Delta)^\frac{a}{2} } f| \Big\|_{L^2(\mathbb{R})},
\end{equation}
which extends the results of Lemma $1.4$ in \cite{Bailey}. Especially, one has
\begin{equation}\label{equ:specase}
 \Big\|\sup_{0<t<1} |e^{i(t+it^\gamma)(-\Delta)^\frac{a}{2}  }  f| \Big\|_{L^2(\mathbb{R})}
  \lesssim
  \Big\|\sup_{0<t<1} |e^{it(-\Delta)^\frac{a}{2} } f| \Big\|_{L^2(\mathbb{R})}.
\end{equation}

\end{remark}

\subsection{Proof of the global estimates in Lemma \ref{maximal estimate} }\label{pf-global}

Without loss of generality, we can assume that $t_{2} < t_{1}$. 
Let $$t = t_{1}-t_{2}, \epsilon= t_{1}^{\gamma} + t_{2}^{\gamma}.$$
\eqref{goal} is reduced to show that there exists $K(x)\in L^1(\R)$ such that
\begin{equation}\label{equ:lem2.2red}
  \Big|\int_{\mathbb{R}}e^{it|\xi|^{a}}e^{-ix\xi}
e^{-\epsilon|\xi|^{a}}
(1+\xi^{2})^{-\frac{\alpha}{2}}(1-\chi(\xi))
\mu\big(\tfrac{\xi}{N}\big)
\,\mathrm{d}\xi\Big| \le C K(x).
\end{equation}
To do this, we introduce the dyadic partition of unity\cite{Miao}
\begin{equation}\label{equ:dyddec}
  \chi(\xi) +\sum_{M\geq1}\eta\big(\tfrac{\xi}{M}\big)=1,
\end{equation}
 where $M$ denotes the dyadic integer,
 $\eta(\xi)$  is a smooth function and such that
$
\text{supp~}\,\,\eta(\xi)\subset \big\{\xi\in\R:\;1\leq|\xi|\leq 4\big\},\quad \eta(\mathbb{R})\subset[0,1],
$
and $
\text{supp~}\,\,\chi(\xi)\subset \big[-2,2], \quad \chi(\xi) = 1 \,\,on\,\, \big[-1,1\big].
$

Applying the dyadic partition of unity to \eqref{equ:lem2.2red}, we
estimate
\begin{align*}
  {\rm LHS~ of~} \eqref{equ:lem2.2red}\le \sum_{M\geq1}|\Lambda_M|,
\end{align*}
with
$$
\Lambda_{M}(x) = \int e^{it|\xi|^{a}}e^{-ix\xi} g_M(\xi) d\xi,
$$
and
$$
g_M(\xi)=e^{-\epsilon |\xi|^{a}}(1+|\xi|^{2})^{-\frac{\alpha}{2}}\eta\big(\tfrac{\xi}{M}\big)\mu\big(\tfrac{\xi}{N}\big).
$$
Thus, we further reduce \eqref{equ:lem2.2red} to show that for $\alpha>\tfrac{a}{2}\big(1-\tfrac{1}\gamma\big)_+, $
\begin{equation}\label{L0LN}
\sum_{M\ge1}\|\Lambda_{M}\|_{L^{1}(\mathbb{R})}\leq C<+\infty,
\end{equation}
uniformly for $t,\epsilon\in(0,2)$.

\vspace{-0.15cm}
\subsubsection{\bf Estimation for $\Lambda_{M}$ with $0<a<1$} Recall
\begin{align*}
  \Lambda_M= &  \int e^{it|\xi|^{a}}e^{-ix\xi} e^{-\epsilon |\xi|^{a}}(1+|\xi|^{2})^{-\frac{\alpha}{2}}\eta\big(\tfrac{\xi}{M}\big)\mu\big(\tfrac{\xi}{N}\big) d\xi\\
  \triangleq&M\int e^{i\Phi_{a}(\xi,Mx,M^{a}t)} e^{-\epsilon M^a|\xi|^{a}}(1+|M\xi|^{2})^{-\frac{\alpha}{2}}\eta(\xi)\mu\big(\tfrac{M\xi}{N}\big) d\xi
\end{align*}
with $\Phi_a(\xi,x,t)=t|\xi|^{a}-x\xi.$ A simple computation shows that
$$\partial_\xi\Phi_{a}(\xi,Mx,M^{a}t)=atM^a|\xi|^{a-2}\xi-Mx.$$



We divide the two cases to estimate $\Lambda_{M}$.

{\bf Case 1: $|x|\geq2^{2-a}M^{a-1}t$.} In this region, we have
$$|\partial_{\xi}\Phi_{a}(\xi,Mx,M^{a}t)| > \frac{|Mx|}{2},$$
and
$$
\Big|\bigg(\tfrac{1}{i\Phi_{a}'}\Big(\tfrac{(1+|M\xi|^{2})^{-\frac{\alpha}{2}}\eta e^{-\epsilon|\xi|^{a}}}{i\Phi_{a}'}\Big)'\bigg)'\Big| \lesssim M^{-\alpha}|Mx|^{-2}.
$$
Hence, using  integration by parts twice, we get
\begin{align*}
 |\Lambda_M|  \lesssim M^{1-\alpha}|Mx|^{-2}.
\end{align*}
On the other hand, it is easy to see that
$$|\Lambda_M|\leq M\int(1+|M\xi|^{2})^{-\frac{\alpha}{2}}\eta(\xi) d\xi\leq C M^{1-\alpha}.$$
Hence
\begin{align}\label{equ:lambmlarge}
  \|\Lambda_M\|_{L^1(|x|\geq2^{2-a}M^{a-1}t)}\leq & CM^{1-\alpha}\|(1+|Mx|)^{-2}\|_{L^1(\R)}\leq CM^{-\alpha}.
\end{align}

{\bf Case 2: $|x|\leq2^{2-a}M^{a-1}t$.} In this region,   for $\xi\in [\frac{1}{2},2]$, we have
$$|\partial_{\xi}^{2}\Phi_{a}(\xi,Mx,M^{a}t)|=at(1-a)M^a|\xi|^{a-2}\geq cM|x|.$$
Hence, we obtain  by Lemma \ref{lem:van der} 
\begin{align*}
  |\Lambda_M|\lesssim & M|Mx|^{-\frac{1}{2}}\left(M^{-\alpha}|e^{-M^{a}\epsilon2^{a}}| + \int \big|\big((1+|M\xi|^{2})^{-\frac{\alpha}{2}}\eta e^{-M^{a}\epsilon|\xi|^{a}}\big)'\big|\,\mathrm{d}\xi\right).
\end{align*}
This together with
the elementary inequality
\begin{equation}\label{exp-est}
 e^{-y}\lesssim_{\beta}y^{-\beta}, \text{~ for any ~ } y,\beta>0,
\end{equation}
yields that
\begin{align}\nonumber
  |\Lambda_M|\lesssim &M|Mx|^{-\frac{1}{2}}\left(M^{-\alpha}M^{-a{\beta}}\epsilon^{-{\beta}}+ \int M^{a}\epsilon|\xi|^{a-1}(1+|M\xi|^{2})^{-\frac{\alpha}{2}}\eta e^{-M^{a}\epsilon|\xi|^{a}}\,\mathrm{d}\xi\right.\\\nonumber
&\left.\hphantom{\lesssim |Mx|^{-\frac{1}{2}}(}+ e^{-M^{a}\epsilon(\frac{1}{2})^{a}}\int\Big(|(1+|M\xi|^{2})^{-\frac{\alpha}{2}}\eta'|+ M^{2}|\xi|(1+|M\xi|^{2})^{-\frac{\alpha}{2}-1}\eta\Big)\,\mathrm{d}\xi\right)\\\nonumber
\lesssim  & M|Mx|^{-\frac{1}{2}}M^{-\alpha-a{\beta}}\epsilon^{-{\beta}}\\\label{equ:lammesgu}
 \lesssim &M|Mx|^{-\frac{1}{2}}M^{-\alpha-a{\beta}}t^{-\gamma{\beta}},
\end{align}
where we have used the fact that $\epsilon>t^\gamma$ by definition of $t$ and $\epsilon$.

{\bf Subcase 2.1:  $\gamma\in(0,1]$ and $\alpha\in(0,1)$.}  Since $t\in(0,2)$ and $|x|\leq2^{2-a}M^{a-1}t$,
we have by \eqref{equ:lammesgu}
$$|\Lambda_M| \lesssim M|Mx|^{-\frac{1}{2}}M^{-\alpha-a\beta}t^{-\beta} \lesssim M^{\frac{1}{2}-\alpha-\beta}|x|^{-\frac{1}{2}-\beta}.$$
Choosing $\beta = \frac12-\tau >0$, we have
\begin{equation}\label{equ:gamsmall}
  \|\Lambda_M  \|_{L^1(|x|\leq2^{2-a}M^{a-1}t)} \lesssim M \int_0^{2^{2-a}M^{a-1}} M^{-\frac{1}{2}-\alpha-\beta}|x|^{-\frac{1}{2}-\beta} dx\lesssim M^{a\tau-\alpha}.
\end{equation}

{\bf Subcase 2.2: $\gamma\in(1,\infty)$ and $\alpha>\frac{a}{2}(1-\frac{1}{\gamma})$.} Note that  $|x|\leq2^{2-a}M^{a-1}t$, we get by  \eqref{equ:lammesgu}
$$|\Lambda_M|\lesssim MM^{-\frac{1}{2}-\alpha-a{\beta}-(1-a)\gamma{\beta}}|x|^{-\frac{1}{2}-\gamma{\beta}}.$$
Taking ${\beta} = \frac{1}{2\gamma}-\tau >0$, we have
$$\|\Lambda_M  \|_{L^1(|x|\leq2^{2-a}M^{a-1}t)} \lesssim M \int_0^{2^{2-a}M^{a-1}} M^{-\frac{1}{2}-\alpha-a{\beta}-(1-a)\gamma{\beta}}|x|^{-\frac{1}{2}-\gamma{\beta}}  dx\lesssim M^{\frac{a}2(1-\frac1\gamma)-\alpha+a\tau}.$$
This estimate together with \eqref{equ:gamsmall} and \eqref{equ:lambmlarge} implies that
\begin{align}
  \sum_{M\geq1}\|\Lambda_M\|_{L^1(\R)}\lesssim & \sum_{M\geq1}\big(M^{-\alpha}+M^{a\tau-\alpha}+M^{\frac{a}2(1-\frac1\gamma)-\alpha+a\tau}\big)<+\infty\label{0<a<1}
\end{align}
provided that $\alpha>\frac{a}{2}(1-\frac{1}{\gamma})_+$ and taking $\tau>0$ small enough. 

\subsubsection{\bf Estimation for $\Lambda_{M}$ with $a=1$}

In this case,
\begin{align*}
  \Lambda_M
  \triangleq&M\int e^{i\Phi(\xi,Mx,Mt)} e^{-\epsilon M|\xi|}(1+|M\xi|^{2})^{-\frac{\alpha}{2}}\eta(\xi)\mu\big(\tfrac{M\xi}{N}\big) d\xi
\end{align*}
with $\Phi(\xi,x,t)=t|\xi|-x\xi.$ Through the direct computation, we have
$$\partial_\xi\Phi(\xi,Mx,Mt)=atM|\xi|^{-1}\xi-Mx.$$

We divide the two cases to estimate $\Lambda_{M}$.

{\bf Case 1: $|x|\geq2t$.}
Note that the argument in case 1 in section 2.2.1 also holds for $a=1$, that is, the estimate \eqref{equ:lambmlarge} holds for $a=1$.

{\bf Case 2: $|x|\leq2t$.}
The elementary inequality
\begin{equation}\label{exp-est}
 e^{-y}\lesssim_{\beta}y^{-\beta}, \text{~ for any ~ } y,\beta>0,
\end{equation}
yields that
\begin{align}
|\Lambda_{M}|
&\lesssim M\int e^{-\epsilon M|\xi|}(1+|M\xi|^{2})^{-\frac{\alpha}{2}}\eta(\xi)\,\mathrm{d}\xi\nonumber\\
&\le M\int_{1}^{4}(\epsilon M|\xi|)^{-\beta}(1+|M\xi|^{2})^{-\frac{\alpha}{2}}\,\mathrm{d}\xi\nonumber\\
&\lesssim M\int_{1}^{4}t^{-\gamma\beta}M^{-\beta-\alpha}|\xi|^{-\beta-\alpha}\,\mathrm{d}\xi\nonumber\\
&\lesssim M^{1-\beta-\alpha}t^{-\gamma\beta}\label{equ:lammesgu-1}
\end{align}
where we have used the fact that $\epsilon>t^\gamma$ by definition of $t$ and $\epsilon$.

{\bf Subcase 2.1:  $\gamma\in(0,1]$ and $\alpha\in(0,1)$.}  Since $t\in(0,2)$ and $|x|\leq2t$,
we have by \eqref{equ:lammesgu-1}
$$|\Lambda_M| \lesssim M^{1-\beta-\alpha}t^{-\beta} \lesssim M^{1-\beta-\alpha}|x|^{-\beta}.$$
Choosing $\beta = 1-\tau >0$, we have
\begin{equation}\label{equ:gamsmall-1}
  \|\Lambda_M  \|_{L^1(|x|\leq2t)} \lesssim \int_0^{4} M^{1-\beta-\alpha}|x|^{-\beta}\,\mathrm{d}x\lesssim M^{\tau-\alpha}.
\end{equation}

{\bf Subcase 2.2: $\gamma\in(1,\infty)$ and $\alpha>\frac{a}{2}(1-\frac{1}{\gamma})$.} Note that  $|x|\leq2t$, we get by  \eqref{equ:lammesgu-1}
$$|\Lambda_M|\lesssim M^{1-\beta-\alpha}|x|^{-\gamma\beta}.$$
Taking ${\beta} = \frac{1}{\gamma}-\tau >0$, we have
$$\|\Lambda_M  \|_{L^1(|x|\leq2t)} \lesssim \int_0^{4} M^{1-\beta-\alpha}|x|^{-\gamma\beta}
\,\mathrm{d}x \lesssim M^{(1-\frac1\gamma)-\alpha+\tau}.$$
This estimate together with \eqref{equ:gamsmall-1} and \eqref{equ:lambmlarge} implies that
\begin{align}
  \sum_{M\geq1}\|\Lambda_M\|_{L^1(\R)}\lesssim & \sum_{M\geq1}\big(M^{-\alpha}+M^{\tau-\alpha}+M^{(1-\frac1\gamma)-\alpha+\tau}\big)<+\infty\label{a=1}
\end{align}
provided that $\alpha>(1-\frac{1}{\gamma})_+$ and taking $\tau>0$ small enough. 

Therefore, combining the estimate \eqref{0<a<1} and \eqref{a=1}, we conclude the proof of Lemma \ref{maximal estimate}(i).

\subsection{Proof of the local estimates in Lemma \ref{maximal estimate} }\label{pf-local}

\subsubsection{{\bf Local estimate for $0<a<1$.}}

  Let $t=t_1-t_2$ and $\epsilon=t_1^\gamma+t_2^\gamma$ as in the above subsection,  $F(x,t,\xi) = t|\xi|^{a} - x\xi$ and $G(\xi) = (1+\xi^{2})^{-\frac{\alpha}{2}}e^{-\epsilon|\xi|^{a}}(1-\chi(\xi))\mu(\frac{\xi}{N})$, then it is equivalent to show for $x\in B(0,1)$
 \begin{equation}\label{equ:locred}
 \Big|\int_{\mathbb{R}}e^{iF(x,t,\xi)}G(\xi)\,\mathrm{d}\xi\Big|\leq C  \begin{cases}
  |x|^{\alpha-1}\quad\text{if}\quad \gamma\in(0,1),\\
  |x|^{\alpha-1} + |x|^{-\sigma}\quad\text{if}\quad \gamma>1,
  \end{cases}
 \end{equation}
where $\sigma$ is defined as in \eqref{equ:sigadef}. To do this, we
%
split the integral into two parts as $A + B$, where
\begin{align*}
&A = \int_{|\xi|\le |x|^{-1}}e^{iF(x,t,\xi)}G(\xi)\,\mathrm{d}\xi,\\
&B = \int_{|\xi|\ge |x|^{-1}}e^{iF(x,t,\xi)}G(\xi)\,\mathrm{d}\xi.
\end{align*}
First, it is easy to see that
\begin{equation}\label{A1}
|A| \lesssim \int_{|\xi|\le |x|^{-1}}(1+\xi^{2})^{-\frac{\alpha}{2}}\,\mathrm{d}\xi \lesssim |x|^{\alpha-1}. 
\end{equation}

It remains to estimate $B$.
By symmetry, we just need to consider the positive part $\{\xi: \xi > |x|^{-1}\}$ of the integral region for $B$. We consider the following two cases.

 {\bf Case 1: $|x|^{a} \ge 2at$.} By a direct calculation, we see that $$ \partial_\xi F(x,t,\xi) = at\xi^{a-1} -x,$$
  and $\partial_\xi F(x,t,\xi)$ is monotonic with respect to $\xi$. Then the conditions $\xi \ge |x|^{-1}$ and $\frac{at}{|x|^{a-1}}\le \frac{|x|}{2}$  imply that
$$
|\partial_\xi F(x,t,\xi)| = |at\xi^{a-1}-x| \ge |x| - |at\xi^{a-1}| \ge \frac{|x|}{2}.
$$
By Lemma \ref{lem:van der}, we have
$$
\left|\int_{\xi>|x|^{-1}}e^{iF(x,t,\xi)}G(\xi)\,\mathrm{d}\xi\right| \lesssim \frac{1}{|x|}\left(\sup_{\xi>|x|^{-1}}|G(\xi)|+\int_{\xi>|x|^{-1}}|G'(\xi)|\,\mathrm{d}\xi\right).
$$
It is easy to see that
\begin{equation}\label{G-estimate}
|G(\xi)| \lesssim (1+\xi^{2})^{-\frac{\alpha}{2}} \lesssim |x|^{\alpha},\quad for\,\, \xi>|x|^{-1}.
\end{equation}
Define $h_{\epsilon}(\xi) = e^{-\epsilon|\xi|^{a}}$,  then we have
\begin{align*}
G'(\xi) = &2\xi\big(-\tfrac{\alpha}{2}\big)(1+\xi^{2})^{-\frac{\alpha}{2}-1}h_{\epsilon}(\xi)(1-\chi(\xi))\mu\big(\tfrac{\xi}{N}\big) + (1+\xi^{2})^{-\frac{\alpha}{2}}h'_{\epsilon}(\xi)(1-\chi(\xi))\mu\big(\tfrac{\xi}{N}\big) \\
&-(1+\xi^{2})^{-\frac{\alpha}{2}}h_{\epsilon}(\xi)\chi'(\xi)\mu\big(\tfrac{\xi}{N}\big)+ (1+\xi^{2})^{-\frac{\alpha}{2}}h_{\epsilon}(\xi)\tfrac{1}{N}(1-\chi(\xi))\mu'\big(\tfrac{\xi}{N}\big).
\end{align*}
Since $|h'_{\epsilon}(\xi)| \lesssim \frac{1}{\xi}$ with constant independent of $\epsilon$, then one can
obtain
\begin{align*}
|G'(\xi)|
&\le \alpha \xi (1+\xi^{2})^{-\frac{\alpha}{2}-1}h_{\epsilon}(\xi)\mu(\frac{\xi}{N})
+ (1+\xi^{2})^{-\frac{\alpha}{2}}|h'_{\epsilon}(\xi)|\mu(\frac{\xi}{N})\\
&\quad +(1+\xi^{2})^{-\frac{\alpha}{2}}h_{\epsilon}(\xi)|\chi'(\xi)|\mu(\frac{\xi}{N})
+ (1+\xi^{2})^{-\frac{\alpha}{2}}h_{\epsilon}(\xi)\frac{1}{N}|\mu'(\frac{\xi}{N})|\\
&\lesssim \xi^{-\alpha-1} + \frac{\xi^{-\alpha}}{N}|\mu'(\frac{\xi}{N})|\\
&\lesssim \xi^{-\alpha-1}.
\end{align*}
From above estimates, we have
$$
\int_{\xi>|x|^{-1}}|G'(\xi)|\,\mathrm{d}\xi \le \int_{|x|^{-1}}^{\infty}\xi^{-\alpha-1}\,\mathrm{d}\xi \le |x|^{\alpha}.
$$
This implies that
\begin{equation}\label{B}
|B| \lesssim |x|^{\alpha-1}.
\end{equation}

{\bf Case 2: $|x|^{a}\le 2at$.}
We split the integral region of $B$ into three parts as follows
\begin{align*}
&I_{1} = \{\xi\ge |x|^{-1}: \xi\le \delta \rho\};\\
&I_{2} = \{\xi\ge |x|^{-1}: \xi\in[\delta\rho,\tfrac{\rho}{\delta}]\};\\
&I_{3} = \{\xi\ge |x|^{-1}: \xi\ge \tfrac{\rho}{\delta} \},
\end{align*}
where $\delta$ is a small constant and $\rho =\big(\tfrac{|x|}{ta}\big)^{\frac{1}{a-1}}$.
Let 
$$
J_{j} = \int_{I_{j}} e^{i(t|\xi|^{a}-x\xi)} (1+\xi^{2})^{-\frac{\alpha}{2}} e^{-\epsilon|\xi|^{a}}(1-\chi(\xi))\mu(\frac{\xi}{N})\,\mathrm{d}\xi,\qquad j\in\{1,2,3\}.
$$
For $\xi\in I_{1}$, we have $at\xi^{a-1}\ge\delta^{a-1}|x|\ge2|x|$, then
$$
|\partial_{\xi}F(x,t,\xi)| = |at\xi^{a-1} - x|\ge|x|.
$$
For $\xi\in I_{3}$, there is $at\xi^{a-1} \le \delta^{1-a}|x|\le\frac{1}{2} |x|$, then
$$
|\partial_{\xi}F(x,t,\xi)| = |at\xi^{a-1} - x|\ge \frac{1}{2} |x|.
$$
It follows from the proof as in Case 1 that
$$
\sup_{\xi>|x|^{-1}}|G(\xi)| + \int_{\xi>|x|^{-1}}|G'(\xi)|\,\mathrm{d}\xi \lesssim |x|^{\alpha}.
$$
By Lemma \ref{lem:van der}, we have
\begin{equation}\label{J1-J3}
|J_{1}|, |J_{3}| \lesssim |x|^{-1}|x|^{\alpha} = |x|^{\alpha-1}.
\end{equation}

Finally we estimate the integral $J_{2}$. Since $\rho$ might be the critical point of $F$, we consider $$\partial_{\xi}^{2}F(x,t,\xi) = a(a-1)t\xi^{a-2}.$$ Using $\xi\thicksim \rho$ instead of $\xi>|x|^{-1}$, then
$$
|\partial_{\xi}^{2}F(x,t,\xi)| \ge t^{\frac{1}{a-1}}|x|^{\frac{a-2}{a-1}}.
$$
As \eqref{G-estimate}, we have
$$
\sup_{\xi\in I_{2}}|G(\xi)| \lesssim \rho^{-\alpha}e^{-\delta^{a}\epsilon\rho^{a}}.
$$
Since $|G'(\xi)| \lesssim \rho^{-\alpha}|h'_{\epsilon}(\xi)| + \rho^{-\alpha-1}h_{\epsilon}(\delta\rho)$, then we have
\begin{align*}
\int_{I_{2}}|G'(\xi)|\,\mathrm{d}\xi
&\lesssim\rho^{-\alpha}\int_{\delta\rho}^{\tfrac{\rho}{\delta}}|h'_{\epsilon}|\,\mathrm{d}\xi + \int_{\delta\rho}^{\tfrac{\rho}{\delta}}\rho^{-\alpha-1}h_{\epsilon}(\delta\rho)\,\mathrm{d}\xi\\
&=-\rho^{-\alpha}\int_{\delta\rho}^{\tfrac{\rho}{\delta}}h_{\epsilon}'(\xi)\,\mathrm{d}\xi + \int_{\delta\rho}^{\tfrac{\rho}{\delta}}\rho^{-\alpha-1}h_{\epsilon}(\delta\rho)\,\mathrm{d}\xi\\
&\thickapprox\rho^{-\alpha}e^{-\delta^{a}\epsilon\rho^{a}}.
\end{align*}
By Lemma \ref{lem:van der}, we have
\begin{align*}
|J_{2}|&\lesssim t^{-\frac{1}{2(a-1)}}|x|^{-\frac{a-2}{2(a-1)}}\left(\sup_{\xi\in I_{2}}|G(\xi)| + \int_{I_{2}}|G'(\xi)|\,\mathrm{d}\xi\right)\\
&\lesssim t^{-\frac{1}{2(a-1)}}|x|^{-\frac{a-2}{2(a-1)}}\rho^{-\alpha}e^{-\delta^{a}\epsilon\rho^{a}}\\
&\lesssim t^{\frac{1}{a-1}(\alpha-\frac{1}{2})}|x|^{\frac{1}{a-1}(-\alpha-\frac{1}{2}(a-2))}e^{-\delta^{a}(t_{1}^{\gamma}+t_{2}^{\gamma})
|x|^{\frac{a}{a-1}}t^{-\frac{a}{a-1}}}
\end{align*}
Note that $t_{1}^{\gamma}+t_{2}^{\gamma}\gtrsim c_{0}(t_{1}+t_{2})^{\gamma}\ge c_{0}t^{\gamma}$ with $c_{0} = 2^{-\gamma}$, then
$$
|J_{2}|\lesssim t^{\frac{1}{a-1}(\alpha-\frac{1}{2})}|x|^{\frac{1}{a-1}(-\alpha-\frac{1}{2}(a-2))}e^{-\delta^{a}c_{0}t^{\gamma-\frac{a}{a-1}}|x|^{\frac{a}{a-1}}}.
$$
To obtain the estimate of $J_{2}$, we consider the following two subcases.
\begin{enumerate}
  \item For $\alpha\in [\frac12,1),$ we have  $ \frac{1}{a-1}(\alpha-\frac{1}{2}) \leq 0$.
Since $|x|^{a}\le 2ta$, we have
\begin{equation}\label{J2-a>1/2}
|J_{2}|\lesssim t^{\frac{1}{a-1}(\alpha-\frac{1}{2})}|x|^{\frac{1}{a-1}(-\alpha-\frac{1}{2}(a-2))}
 \lesssim |x|^{\alpha-1}.
\end{equation}
  \item
For $\alpha \in ( \frac12a (1-\frac{1}{\gamma})_+, \frac12)$, by the inequality \eqref{exp-est},
we have
$$
|J_{2}|\lesssim t^{\frac{1}{a-1}(\alpha-\frac{1}{2})}|x|^{\frac{1}{a-1}(-\alpha-\frac{1}{2}(a-2))}t^{-\beta(\gamma-\frac{a}{a-1})}|x|^{-\frac{\beta a}{a-1}}.
$$
Choose $\beta$ such that  $\frac{1}{a-1}(\alpha-\frac{1}{2})=\beta(\gamma-\frac{a}{a-1})$, i.e., $\beta=\frac{\alpha-\frac{1}{2}}{(a-1)\gamma-a}$. In fact since $\gamma>0, \alpha<\frac{1}{2}$, we have $\beta >0$. Then
\begin{equation}\label{J2-a<1/2}
|J_{2}|\lesssim |x|^{-\sigma},
\end{equation}
where $\sigma = \frac{1}{a-1}(\alpha + \frac{1}{2}(a-2)+\frac{a(\alpha-\frac{1}{2})}{(a-1)\gamma-a})$.
When $\alpha>\frac{1}{2}a(1-\frac{1}{\gamma})$, we have $\sigma<1$.
\end{enumerate}

In summary, collect the estimates \eqref{A1}, \eqref{B}, \eqref{J1-J3}, \eqref{J2-a>1/2} and \eqref{J2-a<1/2}, then we obtain the local estimates \eqref{equ:kxcho} of Lemma \ref{maximal estimate}.

\subsubsection{{\bf Local estimate for $a=1$.}}

Choose the function $\rho(\xi) \in C^{\infty}_{c}(\mathbb{R})$ such that
\begin{equation}
\rho(\xi)=
\begin{cases}
&1,\quad \text{if}\,\, |\xi|<1;\\
&0,\quad \text{if}\,\, |\xi|>2.
\end{cases}
\end{equation}

Split the integral \eqref{goal1} into two parts as follows
\begin{align*}
&\tilde{A}= \int e^{i(t_{1}-t_{2})|\xi|}e^{-ix\xi}e^{-(t_{1}^{\gamma}+t_{2}^{\gamma})|\xi|}\rho(\tfrac{\xi}{|x|^{-\gamma}})
(1+\xi^{2})^{-\frac{\alpha}{2}}(1-\chi(\xi))\mu\big(\tfrac{\xi}{N}\big)\,\mathrm{d}\xi,\\
&\tilde{B}= \int e^{i(t_{1}-t_{2})|\xi|}e^{-ix\xi}e^{-(t_{1}^{\gamma}+t_{2}^{\gamma})|\xi|}(1-\rho(\tfrac{\xi}{|x|^{-\gamma}}))
(1+\xi^{2})^{-\frac{\alpha}{2}}(1-\chi(\xi))\mu\big(\tfrac{\xi}{N}\big)\,\mathrm{d}\xi.
\end{align*}
It is easy to see that
\begin{equation}\label{A}
\begin{split}
|\tilde{A}|
&\le \int_{|\xi|<2|x|^{-\gamma}}(1+\xi^{2})^{-\frac{\alpha}{2}}\,\mathrm{d}\xi
\lesssim |x|^{\gamma(\alpha-1)},
\end{split}
\end{equation}
for $x\in B(0,1)$.

Next we turn to look at the integral $B$. 
We consider the following two cases. 

{\bf{Case 1, $|x|<2t$.}}
In this case, we have
\begin{equation}\label{c1}
\begin{split}
|\tilde{B}|
&\le \int_{|\xi|>|x|^{-\gamma}}e^{-(t^{\gamma}_{1}+t^{\gamma}_{2})|\xi|}(1+|\xi|^{2})^{-\frac{\alpha}{2}}
(1-\chi(\xi))\mu\big(\tfrac{\xi}{N}\big)\,\mathrm{d}\xi\\
&\lesssim \int_{|\xi|>|x|^{-\gamma}}(t^{\gamma}_{1}+t^{\gamma}_{2})^{-\beta}|\xi|^{-\alpha-\beta}\,\mathrm{d}\xi\\
&\lesssim t^{-\gamma\beta} \int_{|\xi|>|x|^{-\gamma}}|\xi|^{-\alpha-\beta}\,\mathrm{d}\xi\\
&\lesssim |x|^{-\gamma\beta}|x|^{\gamma(\alpha+\beta-1)} \lesssim |x|^{\gamma(\alpha-1)}.
\end{split}
\end{equation}

{\bf{Case 2, $|x|>2t$.}}
Choose the functions $\chi_{1}(\xi)$, $\chi_{2}(\xi)$ $\in C^{\infty}_{c}(\mathbb{R})$ such that
\begin{equation}
\chi_{1}(\xi)=
\begin{cases}
&1,\quad \text{if}\,\, \xi<-1;\\
&0,\quad \text{if}\,\, \xi>0,
\end{cases}
\end{equation}
and
\begin{equation}
\chi_{2}(\xi) = \chi_{1}(-\xi).
\end{equation}

Then we have
\begin{align*}
|\tilde{B}|
&= \left|\int e^{it|\xi|}e^{ix\xi}(1-\rho(\tfrac{\xi}{|x|^{-\gamma}}))e^{-(t_{1}^{\gamma}+t_{2}^{\gamma})|\xi|}(1+\xi^{2})^{-\frac{\alpha}{2}}
(1-\chi(\xi))\mu\big(\tfrac{\xi}{N}\big)\,\mathrm{d}\xi\right|\\
&\le \left|\int e^{i(x-t)\xi}(1-\rho(\tfrac{\xi}{|x|^{-\gamma}}))\chi_{1}(\tfrac{\xi}{|x|^{-\gamma}})e^{-(t_{1}^{\gamma}+t_{2}^{\gamma})|\xi|}
(1+\xi^{2})^{-\frac{\alpha}{2}}(1-\chi(\xi))\mu\big(\tfrac{\xi}{N}\big)\,\mathrm{d}\xi\right|\\
& \phantom{\le}+ \left|\int e^{i(x+t)\xi}(1-\rho(\tfrac{\xi}{|x|^{-\gamma}}))\chi_{2}(\tfrac{\xi}{|x|^{-\gamma}})e^{-(t_{1}^{\gamma}+t_{2}^{\gamma})|\xi|}
(1+\xi^{2})^{-\frac{\alpha}{2}}(1-\chi(\xi))\mu\big(\tfrac{\xi}{N}\big)\,\mathrm{d}\xi\right|\\
&= |\tilde{B}_{1}| + |\tilde{B}_{2}|.
\end{align*}
It suffices to estimate $|\tilde{B}_{1}|$, since the estimate for $|\tilde{B}_{2}|$ is similar.

For $|\tilde{B}_{1}|$, we have
\begin{align*}
|\tilde{B}_{1}|
&\le \left|\int e^{i(x-t)\xi}e^{-(t_{1}^{\gamma}+t_{2}^{\gamma})|\xi|}\chi_{1}(\tfrac{\xi}{|x|^{-\gamma}})(1+\xi^{2})^{-\frac{\alpha}{2}}
(1-\chi(\xi))\mu\big(\tfrac{\xi}{N}\big)\,\mathrm{d}\xi\right|\\
&\phantom{\le} + \left|\int e^{i(x-t)\xi}e^{-(t_{1}^{\gamma}+t_{2}^{\gamma})|\xi|}\rho(\tfrac{\xi}{|x|^{-\gamma}})\chi_{1}(\tfrac{\xi}{|x|^{-\gamma}})
(1+\xi^{2})^{-\frac{\alpha}{2}}(1-\chi(\xi))\mu\big(\tfrac{\xi}{N}\big)\,\mathrm{d}\xi\right|\\
&= |\tilde{B}_{11}|+|\tilde{B}_{12}|.
\end{align*}
Note that if $0<\sigma<1$, we  have the estimate for the Bessel potential of order $\sigma$
\begin{equation}\label{bessel}
\left|\int e^{ix\xi}(1+|\xi|^{2})^{-\frac{\sigma}{2}}\,\mathrm{d}\xi\right| \lesssim |x|^{\sigma-1}, \quad \text{for} \,\,x\in \mathbb{R},
\end{equation}
which can be found in \cite{Glafakos M}.
Then
\begin{align*}
|\tilde{B}_{11}|
&= |\mathcal{F}^{-1}(e^{-(t^{\gamma}_{1}+t^{\gamma}_{2})|\xi|}\chi_{1}(\tfrac{\xi}{|x|^{-\gamma}})
(1+|\xi|^{2})^{-\tfrac{\alpha}{2}}(1-\chi(\xi))\mu\big(\tfrac{\xi}{N}\big))(x-t)|\\
&\lesssim \mathcal{M}(\mathcal{F}^{-1}((1+|\xi|^{2})^{-\tfrac{\alpha}{2}}))(x-t)\\
&\lesssim |x-t|^{\alpha-1}\sim  |x|^{\alpha-1},
\end{align*}
and
\begin{align*}
|\tilde{B}_{12}|
&\le \int_{|\xi|<2|x|^{-\gamma}}(1+\xi^{2})^{-\frac{\alpha}{2}}\,\mathrm{d}\xi
\lesssim |x|^{\gamma(\alpha-1)}.
\end{align*}

Thus
\begin{equation}\label{b1}
|\tilde{B}_{1}| \lesssim |x|^{\alpha-1} +|x|^{\gamma(\alpha-1)}.
\end{equation}
Similarly, we have
\begin{equation}\label{b2}
|\tilde{B}_{2}| \lesssim |x|^{\alpha-1} +|x|^{\gamma(\alpha-1)}.
\end{equation}
By \eqref{A}, \eqref{c1}, \eqref{b1} and \eqref{b2}, we obtain that
$$
{\rm LHS} \,\,\text{of}\,\, \eqref{goal1} \lesssim |x|^{\alpha-1} +|x|^{\gamma(\alpha-1)}
$$
for $x\in B(0,1)$.

The proof is completed.

\section{$L^2$-maximal estimates and  almost everywhere convergence }

In this section, we first prove Theorem \ref{main-thm-1} by employing the estimates established in previous section.
Second, we give the proof of the almost everywhere convergence result in Corollary \ref{point}  by a standard method. %
\vskip 0.2in

\subsection{Proof of Theorem $\ref{main-thm-1}$}

First, it is easy to see that third part of Theorem \ref{main-thm-1}
follows from Remark \ref{remark-Poisson} and boundedness of the Hardy-Littlewood operator $\mathcal{M}$ .

(i) Next, we turn to prove Theorem \ref{main-thm-1}(i) by the local estimate \eqref{equ:kxcho}. We first consider the case that $0<a<1.$

Let  $\eta \in \mathcal{S}(\mathbb{R})$ be a  positive, even function with $\text{supp~}\,\eta \subset [-1, 1]$ and $\eta =1$ in $[-\frac{1}{2}, \frac{1}{2}]$. Fix a measurable function $t = t(x) : \mathbb{R}\rightarrow (0,1)$, define for each $N \in \mathbb{N}$
\begin{equation}\label{tx}
P^{t(x)}_{a,\gamma,N}f(x) =\eta (\frac{x}{N})  \int_{\mathbb{R}}\hat{f}(\xi)e^{it(x)|\xi|^{a}}
e^{-t(x)^{\gamma}|\xi|^{a}}e^{ix\xi}\eta(\frac{\xi}{N})\,\mathrm{d}\xi. 
\end{equation}
Thus, we have $P^{t(x)}_{a,\gamma,N}f(x) \rightarrow P^{t(x)}_{a,\gamma}f(x)$, as $N \rightarrow\infty$.
From Fatou's lemma, it suffices to prove for $s > \frac{1}{4}a(1-\frac{1}{\gamma})_{+} $ ~and~$ N\in \mathbb{N}$,
\begin{equation}\label{KSP}
\left\|P^{t(x)}_{a,\gamma,N}f(x)\right\|_{L^{2}(B(0,1))} \le C \|f\|_{H^{s}(\mathbb{R})},
\end{equation}
with some  constant $C$ depending on $a,\gamma,s$ and independent of $f$ and $N.$
And by duality, it is equivalent to prove for $\forall g(x) \in L^{2}(B(0,1))$ with $\|g\|_{L^{2}(B(0,1))} = 1$,
\begin{equation}\label{equ:equiksp}
  \left|\int_{B(0,1)}(P^{t(x)}_{a,\gamma,N}f(x))\overline{g(x)}\,\mathrm{d}x\right| \lesssim \|f\|_{H^{s}(\mathbb{R})}.
\end{equation}
Take the function $\chi(\xi)\in S(\mathbb{R})$ such that
$$
\chi(\mathbb{R}) \subset [0,1], \,\,\chi(\xi) = 1 \,\,\text{on} \,\,[-1,1], \,\,{\rm supp}\,\, \chi(\xi)\subset[-2,2].
$$
Then by Fubini's theorem and Cauchy-Schwarz inequality, we have
\begin{align}\nonumber
   & \left|\int_{B(0,1)}\big(P^{t(x)}_{a,\gamma,N}f(x)\big)\right.\left.\vphantom{\int_{B(0,1)}}\overline{g(x)}\,\mathrm{d}x\right|^{2} \\\nonumber
=  & \left|\int_{\mathbb{R}}\hat{f}(\xi)(1+\xi^{2})^{\frac{s}{2}}(1+\xi^{2})^{-\frac{s}{2}}\eta\big(\tfrac{\xi}{N}\big)
     \int_{B(0,1)}e^{it(x)|\xi|^{a}}e^{-t(x)^{\gamma}|\xi|^{a}}e^{ix\xi}g(x)\eta\big(\tfrac{x}{N}\big)
     \,\mathrm{d}x\mathrm{d}\xi \right|^{2}\\\nonumber
\le& \|f\|^{2}_{H^{s}(\mathbb{R})}\left|\int_{\mathbb{R}}(1+\xi^{2})^{-s}\eta\big(\tfrac{\xi}{N}\big)
     \int_{B(0,1)}\int_{B(0,1)}e^{i(t(x)-t(y))|\xi|^{a}}
    e^{-(t(x)^{\gamma}+t(y)^{\gamma})|\xi|^{a}}e^{i(x-y)\xi}\right. \\\nonumber
   &  \left.\vphantom{\int_{B(0,1)}}\phantom{\|f\|_{H^{s}(\mathbb{R})}
      \left|\int_{\mathbb{R}}(1+\xi^{2})^{-s}\eta\big(\tfrac{\xi}{N}\big)\int_{B(0,1)}\int_{B(0,1)}\right.}
      \times g(x)\overline{g(y)}\eta\big(\tfrac{x}{N}\big)\eta\big(\tfrac{y}{N}\big)\,
      \mathrm{d}x\mathrm{d}y\mathrm{d}\xi\right|\\\nonumber
\le& \|f\|^{2}_{H^{s}(\mathbb{R})} \int_{B(0,1)}\int_{B(0,1)}\big|g(x)\overline{g(y)}\big|\cdot
     \left|\int_{\mathbb{R}}e^{i[(t(x)-t(y))|\xi|^{a}-(y-x)\xi]}(1+\xi^{2})^{-s}\right. \left.e^{-(t(x)^{\gamma}+t(y)^{\gamma})|\xi|^{a}}\eta\big(\tfrac{\xi}{N}\big)^{2}\,\mathrm{d}\xi\right|
     \,\mathrm{d}x\mathrm{d}y\\\nonumber
\le& \|f\|^{2}_{H^{s}(\mathbb{R})} \int_{B(0,1)}\int_{B(0,1)}\big|g(x)\overline{g(y)}\big|\cdot
     \left(\left|\int_{\mathbb{R}}e^{i[(t(x)-t(y))|\xi|^{a}-(y-x)\xi]}(1+\xi^{2})^{-s}\chi(\xi) e^{-(t(x)^{\gamma}+t(y)^{\gamma})|\xi|^{a}}\eta\big(\tfrac{\xi}{N}\big)^{2}\,\mathrm{d}\xi\right|\right.\\\nonumber
&\phantom{\|f\|^{2}_{H^{s}(\mathbb{R})} \int_{B(0,1)}}+ \left.\sup_{t_{1},t_{2}\in(0,1)}\left|\int_{\mathbb{R}}e^{i[(t_1-t_2)|\xi|^{a}-(y-x)\xi]}(1+\xi^{2})^{-s}(1-\chi(\xi))
      e^{-(t_1^{\gamma}+t_2^{\gamma})|\xi|^{a}}\eta\big(\tfrac{\xi}{N}\big)^{2}\,\mathrm{d}\xi\right|\right)
     \,\mathrm{d}x\mathrm{d}y\\\label{equ:kxyest}
\lesssim& \|f\|^{2}_{H^{s}(\mathbb{R})}\int_{B(0,1)}
     \int_{B(0,1)}\big|g(x)\overline{g(y)}\big|\cdot(1+|K(x-y)|)
     \,\mathrm{d}x\mathrm{d}y.
\end{align}
On the other hand, applying
 Lemma \ref{maximal estimate} with $\alpha = 2s$, $\mu = \eta^{2}$, $t_{1} = t(x)$, $t_{2} = t(y)$, 
we get
\begin{align*}
\int_{B(0,1)}\int_{B(0,1)}\big|g(x)\overline{g(y)}\big|\cdot|K(x-y)|\,\mathrm{d}x\mathrm{d}y
\leq& \int_{B(0,1)}\int_{B(0,1)}\big|g(x)\overline{g(y)}\big|\big( |x-y|^{\alpha-1} + |x-y|^{-\sigma}\big)\,\mathrm{d}x\mathrm{d}y\\
=&
\int_{B(0,1)}\int_{B(0,1)}\big|g(x)\overline{g(y)}\big|\cdot|x-y|^{\alpha-1}\,\mathrm{d}x\mathrm{d}y\\
&+ \int_{B(0,1)}\int_{B(0,1)}\big|g(x)\overline{g(y)}\big|\cdot|x-y|^{-\sigma}\,\mathrm{d}x\mathrm{d}y\\
=& M_{1} + M_{2}.
\end{align*}
Since $\frac{1}{2}a(1-\frac{1}{\gamma})_{+}<\alpha = 2s <\frac12$, by H\"{o}lder inequality and Hardy-Littlewood-Sobolev inequality, we estimate
\begin{align*}
M_{1}
\le& \|g(x)\|_{L^{2}(B(0,1))}\left\|\int_{B(0,1)} \frac{|g(y)|}{|x-y|^{1-\alpha}}\,\mathrm{d}y\right\|_{L^{2}(B(0,1))}\\
\le& \|g(x)\|_{L^{2}(B(0,1))}\|g(y)\|_{L^{\frac{2}{2\alpha+1}}(B(0,1))}\\
\le& \|g(x)\|_{L^{2}(B(0,1))}\|g(y)\|_{L^{2}(B(0,1))} \le 1.
\end{align*}
Similarly, we obtain $M_{2} \le 1$. Plugging this into \eqref{equ:kxyest}, we obtain \eqref{equ:equiksp}.

By the similar argument above, we can also obtain the proof of the case $a=1$. So we conclude the proof of
 Theorem \ref{main-thm-1}(i).

(ii) Finally, we shall prove Theorem \ref{main-thm-1}(ii).

For the case $a\in(0,1]$, split the function $f(x)$ into two functions as follows
$$
f(x) = \mathcal{F}^{-1}(\hat{f}\chi(\xi)) + \mathcal{F}^{-1}(\hat{f}(1-\chi(\xi)))\triangleq f_{1}(x) + f_{2}(x).
$$

{\bf Step 1: Estimation for $f_{1}(x)$.}
Choose the function $\psi(t)\in S(\mathbb{R})$ such that
\begin{equation}
\psi(t)= \begin{cases}
&1, \quad\text{if}\,\,\,|t|\le1,\\
&0, \quad\text{if}\,\,\,|t|\ge2.
\end{cases}
\end{equation}
Set
$$
T_{t}f(x) = \psi(t)\int e^{ix\xi}e^{it|\xi|^{a}}e^{-|t|^{\gamma}|\xi|^{a}}\hat{f}(\xi)\,\mathrm{d}\xi,
$$
where $a\in(0,1]$ and $\gamma>1$. By Plancherel's theorem, it is easy to see that
\begin{equation}\label{low-norm}
\begin{split}
 \|T_{t}f_{1}\|_{L_{t,x}^{2}(\mathbb{R}\times\mathbb{R})}
= \|\psi(t)e^{-t^{\gamma}|\xi|^{a}}\hat{f}\chi(\xi)\|_{L_{t,\xi}^{2}(\mathbb{R}\times\mathbb{R})}
\le \|\psi(t)\|_{L^{2}_{t}(\mathbb{R})}\|\hat{f}\chi(\xi)\|_{L^{2}_{\xi}(\mathbb{R})}
\lesssim \|f(x)\|_{L^{2}(\mathbb{R})}.
\end{split}
\end{equation}
We have
\begin{align*}
\partial_{t}T_{t}f(x)
&= \psi'(t)\int e^{ix\xi}e^{it|\xi|^{a}}e^{-t^{\gamma}|\xi|^{a}}\hat{f}(\xi)\,\mathrm{d}\xi\\
&\phantom{=}+\psi(t)\int e^{ix\xi}e^{it|\xi|^{a}}e^{-t^{\gamma}|\xi|^{a}}(i-\gamma |t|^{\gamma-2}t)|\xi|^{a}\hat{f}(\xi)\,\mathrm{d}\xi.
\end{align*}
Since $t\in {\rm supp}\,\,\psi \subset[-2,2]$ and $\gamma>1$, then $\big|i-\gamma |t|^{\gamma-2}t\big| \le C_{\gamma}$.
By \eqref{low-norm}, we have
\begin{equation}\label{high-norm}
\begin{split}
\|T_{t}f_{1}\|_{L^{2}_{x}(\mathbb{R},H^{1}_{t}(\mathbb{R}))}
\le{} &\|T_{t}f_{1}\|_{L_{t,x}^{2}(\mathbb{R}\times\mathbb{R})}
+\|T_{t}(\mathcal{F}^{-1}(|\xi|^{a}\hat{f}_{1}))\|_{L_{t,x}^{2}(\mathbb{R}\times\mathbb{R})}\\
\lesssim{} &\|f(x)\|_{L^{2}(\mathbb{R})} + \||\xi|^{a}\hat{f}\chi(\xi)\|_{L^{2}_{\xi}(\mathbb{R})}\\
\lesssim{} &\|f(x)\|_{L^{2}(\mathbb{R})}
\end{split}
\end{equation}
Interpolation between \eqref{low-norm} and \eqref{high-norm} yields
\begin{equation}\label{interpolation}
\|T_{t}f_{1}\|_{L^{2}_{x}(\mathbb{R},H^{r}_{t}(\mathbb{R}))}
\lesssim \|f\|_{L^{2}(\mathbb{R})},     \quad \text{for}\,\, r\in[0,1].
\end{equation}
Taking $r=\frac23$ in \eqref{interpolation}, by Sobolev inequality, we have
\begin{equation}\label{global-f1}
\begin{split}
\|\sup_{0<t<1}|P^{t}_{a,\gamma}f_{1}|\|_{L^{2}(\mathbb{R})}
\le \|T_{t}f_{1}\|_{L^{2}_{x}(\mathbb{R},H^{\frac23}_{t}(\mathbb{R}))}
\lesssim \|f\|_{L^{2}(\mathbb{R})},
\end{split}
\end{equation}
where $a\in(0,1]$ and $\gamma>1$.

{\bf Step 2: Estimation for $f_{2}$.}
Similar to (i), we only need to prove
\begin{equation}\label{global-f2}
\left\|P^{t(x)}_{a,\gamma,N}f_{2}(x)\right\|_{L^{2}(\R)} \le C \|f\|_{H^{s}(\mathbb{R})}.
\end{equation}
By duality, it is equivalent to prove for $\forall g(x) \in L^{2}(\mathbb{R})$ with $\|g\|_{L^{2}(\mathbb{R})} = 1$
\begin{equation}\label{equ:equiksp-global}
  \left|\int_{B(0,1)}\big(P^{t(x)}_{a,\gamma,N}f_{2}(x)\big)\overline{g(x)}\,\mathrm{d}x\right| \lesssim \|f\|_{H^{s}(\mathbb{R})}.
\end{equation}
Through the same argument in \eqref{equ:kxyest}, we can obtain
\begin{align}\nonumber
   & \left|\int_{\mathbb{R}}\big(P^{t(x)}_{a,\gamma,N}f_{2}(x)\big)\right.\left.\vphantom{\int_{B(0,1)}}\overline{g(x)}\,\mathrm{d}x\right|^{2} \\\nonumber
=  & \left|\int_{\mathbb{R}}\hat{f}(\xi)(1-\chi(\xi))(1+\xi^{2})^{\frac{s}{2}}(1+\xi^{2})^{-\frac{s}{2}}\eta\big(\tfrac{\xi}{N}\big)
     \int_{\mathbb{R}}e^{it(x)|\xi|^{a}}e^{-t(x)^{\gamma}|\xi|^{a}}e^{ix\xi}g(x)\eta\big(\tfrac{x}{N}\big)
     \,\mathrm{d}x\mathrm{d}\xi \right|^{2}\\\nonumber
\le& \|f\|^{2}_{H^{s}(\mathbb{R})} \int_{\mathbb{R}}\int_{\mathbb{R}}\big|g(x)\overline{g(y)}\big|\cdot
     \left|\int_{\mathbb{R}}e^{i[(t(x)-t(y))|\xi|^{a}-(y-x)\xi]}(1+\xi^{2})^{-s}\right. \left.e^{-(t(x)^{\gamma}+t(y)^{\gamma})|\xi|^{a}}(1-\chi(x))^{2}\eta\big(\tfrac{\xi}{N}\big)^{2}\,\mathrm{d}\xi\right|
     \,\mathrm{d}x\mathrm{d}y\\\nonumber
\le& \|f\|^{2}_{H^{s}(\mathbb{R})} \int_{\mathbb{R}}\int_{\mathbb{R}}\big|g(x)\overline{g(y)}\big|\cdot
     \sup_{t_{1},t_{2}\in(0,1)}\left|\int_{\mathbb{R}}e^{i[(t_1-t_2)|\xi|^{a}-(y-x)\xi]}(1+\xi^{2})^{-s}
      e^{-(t_1^{\gamma}+t_2^{\gamma})|\xi|^{a}}(1-\tilde{\chi}(\xi))\eta\big(\tfrac{\xi}{N}\big)^{2}\,\mathrm{d}\xi\right|
     \,\mathrm{d}x\mathrm{d}y\\\label{equ:kxyest-global}
\lesssim& \|f\|^{2}_{H^{s}(\mathbb{R})}\int_{\mathbb{R}}
     \int_{\mathbb{R}}\big|g(x)\overline{g(y)}\big|\cdot|K(x-y)|
     \,\mathrm{d}x\mathrm{d}y,
\end{align}
where $\tilde{\chi}(\xi) = 2\chi(\xi) - \chi^{2}(\xi) \in S(\mathbb{R})$ satisfying that
$$
\tilde{\chi}(\mathbb{R}) \subset [0,1], \,\,\tilde{\chi}(\xi) = 1 \,\,\text{on} \,\,[-1,1], \,\,{\rm supp}\,\, \tilde{\chi}\subset[-2,2].
$$
For \eqref{equ:kxyest-global}, by Lemma \ref{maximal estimate} with $\alpha = 2s$, $\mu = \eta^{2}$, $t_{1} = t(x)$, $t_{2} = t(y)$, and
Young's convolution inequality, we obtain that
$$
\left|\int_{\mathbb{R}}\big(P^{t(x)}_{a,\gamma,N}f_{2}(x)\big)\overline{g(x)}\,\mathrm{d}x\right|
\lesssim \|f\|^{2}_{H^{s}(\mathbb{R})}\|g(x)\|_{L^{2}(\mathbb{R})}\|g(y)\|_{L^{2}(\mathbb{R})}
\le \|f\|^{2}_{H^{s}(\mathbb{R})},
$$
which proves the estimate \eqref{global-f2}.

\eqref{global-f1} and \eqref{global-f2} together finish the proof of  Theorem \ref{main-thm-1}.

\subsection{The proof of Corollary \ref{point}}
First we claim that for $\forall f \in S(\mathbb{R})$, we have that
\begin{equation}\label{schwartz}
P^{t}_{a,\gamma}f(x) \rightarrow f(x), \,\,\forall x\in \mathbb{R}.
\end{equation}
In fact, through a direct computation, we have
\begin{align*}
|P^{t}_{a,\gamma}f(x) - f(x)| = \left|\int e^{ix\xi}\hat{f}(\xi)(e^{it|\xi|^{a}}e^{-t^{\gamma}|\xi|^{a}}-1)\,\mathrm{d}\xi\right|
\lesssim \int \big|\hat{f}(\xi)(e^{it|\xi|^{a}}e^{-t^{\gamma}|\xi|^{a}}-1)\big|\,\mathrm{d}\xi.
\end{align*}
We see that when $t \rightarrow 0$, there holds pointwise convergence
$$
\hat{f}(\xi)(e^{it|\xi|^{a}}e^{-t^{\gamma}|\xi|^{a}}-1) \rightarrow 0,  \,\,\,\forall \xi\in \mathbb{R}.
$$
Meanwhile, we have
$$
|\hat{f}(\xi)(e^{it|\xi|^{a}}e^{-t^{\gamma}|\xi|^{a}}-1)| \le 2|\hat{f}(\xi)| \in L^{1}.
$$
Then \eqref{schwartz} follows from the Lebesgue Dominated Convergence Theorem.

Next we consider the function $f\in H^{s}$. Since Schwartz functions are dense in $H^{s}$, then for $\forall \epsilon >0$, we have $f = g + h$, where g is the Schwartz function and $\|h\|_{H^{s}} < \epsilon$. With this decomposition of $f$, we have that
\begin{align*}
\limsup_{t \rightarrow 0}|P^{t}_{a,\gamma}f(x) -f(x)|
\le &~\limsup_{t \rightarrow 0}|P^{t}_{a,\gamma}g(x) - g(x)| + \limsup_{t \rightarrow 0}|P^{t}_{a,\gamma}h(x) - h(x)|\\
\le& \sup_{0 < t \le 1}|P^{t}_{a,\gamma}h(x)| + |h(x)|.
\end{align*}

For $\forall \lambda >0$, set
$$
E_{\lambda} = \left\{x\in \R: \limsup_{t \rightarrow 0}|P^{t}_{a,\gamma}f(x) - f(x)| > \lambda\right\}.
$$
and
$$
E = \cup_{k=1}^{\infty} E_{\frac{1}{k}}.
$$
In order to prove the convergence a.e., we just need to prove $|E| = 0$.
It is obvious that
\begin{equation}\label{decomposition}
|E_{\lambda}|
\le \left|\left\{x\in \R: \sup_{0 < t \le 1}|P^{t}_{a,\gamma}h(x)| > \frac{\lambda}{2}\right\}\right|
+ \left|\left\{x\in \R: |h(x)| > \frac{\lambda}{2}\right\}\right|.
\end{equation}

{\bf Case 1: $\gamma >1$.}
By the maximal estimate 
in Theorem \ref{main-thm-1}(ii), for $0<a<1$ with $s>\frac{1}{4}a(1-\frac1\gamma)$ and $a=1$ with $s>\frac{1}{2}(1-\frac1\gamma)$, we have
\begin{equation}\label{1-term}
\begin{split}
\left|\left\{x\in \R: \sup_{0 < t \le 1}|P^{t}_{a,\gamma}h(x)| > \tfrac{\lambda}{2}\right\}\right|
\le  \frac{\|\sup_{0 < t \le 1}|P^{t}_{a,\gamma}h(x)|\|_{L^{2}(\R)}}{(\lambda/2)^{2}} \lesssim \frac{\|h\|^{2}_{H^{s}}}{\lambda^{2}} \lesssim \frac{\epsilon^{2}}{\lambda^{2}},
\end{split}
\end{equation}
and
\begin{equation}\label{2-term}
|\{x\in \R: |h(x)| > \tfrac{\lambda}{2}\}| \le \frac{\|h\|_{L^2}^{2}}{(\lambda/2)^{2}} \le \frac{\|h\|_{H^{s}}^{2}}{(\lambda/2)^{2}} \lesssim \frac{\epsilon^{2}}{\lambda^{2}}.
\end{equation}
Therefore, for $\forall \lambda > 0$, there holds
\begin{equation}\label{two-term}
|E_{\lambda}| \lesssim \frac{\epsilon^{2}}{\lambda^{2}}
\end{equation}
for $\forall \epsilon > 0$. Then we have $|E_{\lambda}| = 0$ for any $\lambda > 0$. These imply that
$$
\left|\left\{x\in \R: \limsup_{t \rightarrow 0}|P^{t}_{a,\gamma}f(x) - f(x)| \neq 0\right\}\right| = |E| = 0.
$$

Thus for $a\in(0,1)$, we have
$$
 \lim_{t\rightarrow 0^+ } P^{t}_{a,\gamma}f(x) =f(x),\quad \textrm{for}\,\,a.e.\,x\in \R,
$$
where $f\in H^{s}$, $s>\frac{1}{4}a(1-\frac1\gamma)$ and $\gamma>1$.

For $a=1$, we have
$$
 \lim_{t\rightarrow 0^+ } P^{t}_{1,\gamma}f(x) =f(x),\quad \textrm{for}\,\,a.e.\,x\in \R,
$$
where $f\in H^{s}$, $s>\frac{1}{2}(1-\frac1\gamma)$ and $\gamma>1$.

{\bf Case 2: $0<\gamma\le1$.}
Let $f\in L^{p}(\R)$, $1\le p<\infty$. For $\forall \epsilon >0$, choose $g\in S(\R)$ such that $f=g+h$ with $\|h\|_{L^{p}} < \epsilon$.
By the estimate \eqref{maximal-est-L2} and \eqref{p=1} in Theorem \ref{main-thm-1}, we have the weak type inequality
\begin{equation}\label{weak(p,p)}
\left|\{x\in\R: |P^{\ast}_{a,\gamma}f(x)| > \lambda\}\right| < C \frac{\|f\|^{p}_{L^{p}}}{\lambda^{p}},
\end{equation}
for $f\in L^{p}(\R)$, $1\le p<\infty$, $\lambda >0$.

By the estimate \eqref{decomposition},  \eqref{2-term} and \eqref{weak(p,p)}, we have
\begin{align*}
|E_{\lambda}|
&\le \left|\left\{x\in \R: \sup_{0 < t \le 1}|P^{t}_{a,\gamma}h(x)| > \tfrac{\lambda}{2}\right\}\right|\phantom{\le}+|\{x\in \R: |h(x)| > \tfrac{\lambda}{2}\}| \\
&\lesssim  \frac{\|h\|_{L^p}^{p}}{(\lambda/2)^{p}} \lesssim \frac{\epsilon^{p}}{\lambda^{p}},
\end{align*}
then we have $|E_{\lambda}| = 0$, for any $ \lambda>0$. This yields that $|E| =0$, which means for $0<a\le1$,
\begin{equation*}
 \lim_{t\rightarrow 0^+ } P^{t}_{a,\gamma}f(x) =f(x),\quad \textrm{for}\,\,a.e.\,x\in \R,
\end{equation*}
where $f\in L^{p}$ with $1\le p<\infty$.

The proof is completed.

\section{Sharpness of the sobolev index $s$ when $0<a\leq1$}

In this section, we will show the necessary condition for the pointwise convergence, that is Theorem \ref{neg-thm}. To do this, we first
derive that the almost everywhere convergence result implies the weak boundedness of the operator $P^{*}_{a,\gamma}.$

\begin{proposition}\label{prop:poconwe}
If  the almost everywhere convergence for $P^{t}_{a,\gamma}f(x)$ with $f\in H^{s}$ holds, then,
for any $ \epsilon >0$, there exists a set $E_{\epsilon} \subset [0,1]$ with $mE_{\epsilon}> m[0,1]-\epsilon$ such that
\begin{equation}\label{equ:weak2-2}
\big|\{x\in E_{\epsilon}: |P^{*}_{a,\gamma}f(x)| > \lambda\}\big|
\le C_{\epsilon}\lambda^{-2}\|f\|^{2}_{H^{s}},
\end{equation}
for $\forall \lambda >0$ and $f\in H^{s}(\mathbb{R})$.
\end{proposition}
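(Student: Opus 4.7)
The plan is to deduce the weak-type $(2,2)$ estimate on a large subset directly from the almost-everywhere convergence hypothesis by invoking Nikishin's theorem \cite{Ninkishin}, which is tailor-made for this qualitative-to-quantitative passage. Nikishin's theorem states that any sublinear operator from a Banach space of type $2$ into $L^{0}$ of a finite measure space, continuous with respect to convergence in measure, is automatically of restricted weak type $(2,2)$ when its target is restricted to a subset of arbitrarily large measure. Since $H^{s}(\mathbb{R})$ is a Hilbert space and hence of type $2$, and since $[0,1]$ has finite Lebesgue measure, the setup matches perfectly; the conclusion \eqref{equ:weak2-2} is precisely what Nikishin's theorem outputs.

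Concretely, I would verify three hypotheses in order. First, the sublinearity of $P^{*}_{a,\gamma}$ follows immediately from the linearity of the individual operators $P^{t}_{a,\gamma}$ and the triangle inequality for the supremum. Second, the a.e.\ finiteness of $P^{*}_{a,\gamma}f$ on $[0,1]$ for each $f\in H^{s}$: the hypothesis $\lim_{t\to0^{+}}P^{t}_{a,\gamma}f(x)=f(x)$ a.e.\ forces $\limsup_{t\to0^{+}}|P^{t}_{a,\gamma}f(x)|=|f(x)|<\infty$ a.e., while for $t$ bounded away from $0$ the dissipative factor $e^{-t^{\gamma}|\xi|^{a}}$ in the symbol combined with Cauchy--Schwarz yields a uniform bound $|P^{t}_{a,\gamma}f(x)|\lesssim_{t}\|f\|_{H^{s}}$. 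Third, continuity in measure of the map $f\mapsto P^{*}_{a,\gamma}f$ from $H^{s}$ into $L^{0}([0,1])$.

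The third step is the main obstacle, and this is where Nikishin's argument hinges on Baire category. I would introduce the closed sets
\[
A_{N}=\Big\{f\in H^{s}:\; m\big(\{x\in[0,1]:\,P^{*}_{a,\gamma}f(x)>N\}\big)\le\delta\Big\},
\]
for a fixed small $\delta>0$, verify that each $A_{N}$ is closed in $H^{s}$ via Fatou applied to a converging sequence $f_{n}\to f$ (using the continuity of $t\mapsto P^{t}_{a,\gamma}f(x)$ on $(0,1)$ for fixed $f\in H^{s}$, which lets us reduce $\sup_{0<t<1}$ to a countable supremum), and observe that $H^{s}=\bigcup_{N}A_{N}$ by the second step. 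The Baire category theorem then yields some $A_{N_{0}}$ with nonempty interior; translating this interior ball back to the origin via the sublinearity of $P^{*}_{a,\gamma}$ yields continuity in measure at $0$, and by sublinearity again everywhere. Once the three hypotheses are in place, Nikishin's theorem produces the set $E_{\epsilon}\subset[0,1]$ with $m(E_{\epsilon})>1-\epsilon$ on which \eqref{equ:weak2-2} holds, with constant $C_{\epsilon}$ blowing up as $\epsilon\to0$. This proposition will then be combined with an explicit counterexample in Section 4 to force $s\ge\tfrac{1}{4}a\bigl(1-\tfrac{1}{\gamma}\bigr)$ (respectively $s\ge\tfrac{1}{2}(1-\tfrac{1}{\gamma})$ when $a=1$).
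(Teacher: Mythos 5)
Your proposal is correct and follows essentially the same route as the paper: both rely on Nikishin's theorem on bounded hyperlinear operators (Lemma~\ref{Nikishin}) to extract the restricted weak-type $(2,2)$ bound on a subset of $[0,1]$ of almost full measure. The only difference is that where the paper verifies the boundedness/hyperlinearity hypothesis by citing a second Nikishin result (Lemma~\ref{Nikishin 2}, which passes from a.e.\ convergence of a family of continuous-in-measure linear operators to boundedness of the associated maximal operator), you re-derive that intermediate step by hand via the standard Baire-category argument on the closed sets $A_N$, together with the Cauchy--Schwarz check of a.e.\ finiteness of $P^{*}_{a,\gamma}f$.
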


To prove Proposition \ref{prop:poconwe}, we need two results of Nikishin \cite{Ninkishin}.

\begin{lemma}[Nikishin \cite{Ninkishin}]\label{Nikishin}
Assume $X$ is a space with $\sigma$-finite measure, $D$ is an N-dimensional region with $mD <\infty$ and $S(D)$ denotes the set of the measurable functions on the region $D$. Assume also that $L^{p}(X)$ is separable. Let $G$ be a bounded\footnote{Here we say $G : L^{p}(X) \rightarrow S(D)$ is bounded, if for $\forall \epsilon >0$, there exists a constant $R>0$ such that for $\forall f\in L^{p}(X)$ with $\|f\|_{L^{p}} \le 1$, we have $m\{x: |Gf| \ge R\} \le \epsilon$.} hyperlinear operator from $L^{p}(X)$ into $S(D)$, which means that
$$
G(f+g) \le Gf + Gg,\quad f,\, g\in  L^{p}(X).
$$
Then for an arbitrary $\epsilon >0$ there exists a set $E_{\epsilon}\subset D$ with $mE_{\epsilon} \geq mD-\epsilon$ such that

\begin{equation}
m\{x\in E_{\epsilon}, |Gf|\geq \lambda\} \leq C_{\epsilon}\Bigl(\frac{\|f\|_{L^{p}}}{\lambda}\Bigr)^{q},
\end{equation}
for all $\lambda >0$ and $f\in L^{p}(X)$. Here $q=min(p,2)$.

\end{lemma}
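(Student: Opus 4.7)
The statement is Nikishin's classical factorization theorem for sublinear operators (proved in full generality in the cited paper), and my plan is to reconstruct the main line of his argument. The core idea is that the abstract boundedness-in-measure hypothesis can be upgraded, after removing from $D$ an exceptional set of arbitrarily small measure, to a quantitative weak-type $(p,q)$ inequality with $q = \min(p,2)$.

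First I would reformulate the boundedness hypothesis quantitatively: for each $n \in \mathbb{N}$ choose $R_n > 0$ so that
$$m\{x \in D : |Gf(x)| \ge R_n\} \le 2^{-n}\epsilon$$
whenever $\|f\|_{L^p(X)} \le 1$. The sequence $\{R_n\}$ records the scale at which $G$ can behave badly on a set of given measure. Since $L^p(X)$ is assumed separable, I would fix a countable dense family $\{f_k\}$ in the unit ball and reduce all selection arguments to this family.

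The exceptional set $D \setminus E_\epsilon$ is then built by a stopping-time style construction. For each dyadic level $k$, remove the set of points where $|Gf_j(x)|$ exceeds $R_k$ for some $f_j$ in an appropriate finite sub-collection; the geometric summability $\sum 2^{-n} < \infty$ ensures that the total measure removed is $O(\epsilon)$. Call the resulting set $E_\epsilon$, so that $m E_\epsilon \ge m D - \epsilon$ by construction.

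The central step — and what I expect to be the main obstacle — is to combine sublinearity with the control afforded by $E_\epsilon$ to obtain the weak-type bound for arbitrary $f$ of unit $L^p$ norm. One decomposes $f = \sum_j f_j$ dyadically so that the norms $\|f_j\|_{L^p}$ decay geometrically, and uses $|Gf| \le \sum_j |Gf_j|$ pointwise on $E_\epsilon$. When $p \le 2$, a direct summation followed by Chebyshev at exponent $p$ gives the exponent $q = p$ in the conclusion. When $p > 2$, one instead inserts Rademacher signs in front of the blocks $f_j$ and invokes Khintchine's inequality to trade the $\ell^p$-type summation for an $\ell^2$-type norm, which is precisely the mechanism that forces the exponent $q = 2$ (and no better) for $p > 2$. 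The most delicate technical ingredient is a measurable-selection / approximation argument that converts the estimate, which is a priori valid only for the countable family $\{f_k\}$, into one that holds for all $f$ in the unit ball of $L^p(X)$; this is where the separability hypothesis enters essentially, together with the sublinearity $G(f+g) \le Gf + Gg$ used to absorb approximation errors $G(f - f_k)$.
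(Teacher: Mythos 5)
The paper does not prove this lemma at all: it is stated verbatim as a black-box citation of Nikishin's original article (as is Lemma \ref{Nikishin 2}), and the authors immediately proceed to apply it to derive Proposition \ref{prop:poconwe}. So there is no in-paper proof to compare your sketch against, and I will instead assess the sketch on its own terms.

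Your sketch correctly identifies the high-level ingredients of Nikishin's argument -- Rademacher randomization, Khintchine's inequality to obtain the exponent $q=\min(p,2)$ when $p>2$, separability, and sublinearity -- but the construction of the exceptional set as you describe it does not work, and this is precisely the heart of the theorem. The boundedness hypothesis gives, for fixed $n$, a single number $R_n$ with $m\{|Gf|\ge R_n\}\le 2^{-n}\epsilon$ for \emph{each individual} unit-norm $f$. If you then ``remove the set where $|Gf_j|>R_k$ for some $f_j$ in a finite sub-collection,'' the measure of that union can only be bounded by (size of sub-collection) $\times 2^{-k}\epsilon$, and to cover the whole dense family you would need the sub-collections to exhaust $\{f_j\}$, at which point $\sum_k m_k 2^{-k}$ is no longer summable in general. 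Worse, even if you remove a controlled amount, the final approximation step $|Gf|\le|Gf_k|+|G(f-f_k)|$ gives you nothing quantitative: the boundedness hypothesis is only \emph{in measure} and does not scale with $\|f-f_k\|_{L^p}$, so $G(f-f_k)$ is not small just because $f-f_k$ is small in norm. This gap is exactly the difficulty Nikishin's theorem is designed to overcome.

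The actual proof reverses the order of your two main steps. One first uses sublinearity plus Rademacher signs plus Khintchine plus the boundedness-in-measure hypothesis to prove a \emph{uniform finite-family estimate}: for every $\delta>0$ there is $C_\delta$ so that for any finite $f_1,\dots,f_n$ with $\sum_i\|f_i\|_{L^p}^q\le 1$ one has $m\{x\in D:\max_i|Gf_i(x)|>C_\delta\}<\delta$. Only after that uniform estimate is in hand does one run an exhaustion (or extremal-set, or Baire-category) argument on $D$ to extract $E_\epsilon$; the set $E_\epsilon$ is \emph{not} built by peeling off bad sets for a dense sequence. The separability is then used at the very end, in conjunction with the already-established quantitative weak-type bound on $E_\epsilon$, to pass from a dense family to all of $L^p(X)$ -- and at that stage the approximation argument is legitimate precisely because the bound on $E_\epsilon$ is now quantitative. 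If you want to fill in your sketch, the finite-family maximal lemma is the missing piece and must come first.
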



The second result explains the relationship between the almost everywhere pointwise convergence and the boundedness for the relevant operator.

\begin{lemma}[Nikishin\cite{Ninkishin}]\label{Nikishin 2}
Let $T_{n} : L^{p}(X) \rightarrow S[0,1]$ be a sequence of linear operators which are continuous in measure\footnote{Each $T_{n}$ is continuous in measure, if convergence of $f_{k} \rightarrow f_{0}$ in $L^{p}(X)$ implies convergence of $T_{n}f_{k} \rightarrow T_{n}f_{0}$ in measure on $[0,1]$.}. If for each $f \in L^{p}(X)$ the $\lim_{n\rightarrow\infty}T_{n}f$ exists almost everywhere on $[0,1]$, then the operator $G$ defined by $Gf = \sup_{n}|T_{n}f|$ is hyperlinear and bounded.
\end{lemma}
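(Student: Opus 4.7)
Hyperlinearity is immediate: for each $n$ and $x$, $|T_n(f+g)(x)|\leq|T_nf(x)|+|T_ng(x)|$, so taking the supremum over $n$ yields $G(f+g)\leq Gf+Gg$. The content of the lemma is boundedness, meaning that for every $\epsilon>0$ there exists $R>0$ with $m\{|Gf|\geq R\}\leq\epsilon$ whenever $\|f\|_{L^p(X)}\leq 1$. My plan is to apply the Baire category theorem in the complete metric space $L^p(X)$, in the spirit of the Banach continuity principle for maximal operators.

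First I would use a.e.\ convergence of $T_nf$ to conclude $Gf<\infty$ a.e.\ for each fixed $f$, and introduce the truncated maximal operators $G_Nf=\max_{1\leq n\leq N}|T_nf|$, which inherit continuity in measure from the $T_n$ since the maximum of finitely many sequences convergent in measure converges in measure. For fixed $\epsilon>0$ consider the sublevel sets
\[
A_{N,R}=\big\{f\in L^p(X):m\{G_Nf>R\}\leq\tfrac{\epsilon}{2}\big\},
\]
which are closed in $L^p(X)$ by continuity in measure of $G_N$ and whose union over $R>0$ is all of $L^p(X)$ because $G_Nf<\infty$ a.e. Baire category then furnishes $R_0=R_0(N)$, $\delta=\delta(N)>0$ and $f_0$ with $B(f_0,\delta)\subset A_{N,R_0}$; linearity of the $T_n$ together with sublinearity of $G_N$ transfer the bound to the origin to give $m\{G_Nh>2R_0\}\leq\epsilon$ for $\|h\|_{L^p}\leq\delta$, and rescaling yields a weak-type estimate for $G_N$ with constant $R(N):=2R_0/\delta$.

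The main obstacle, and the heart of Nikishin's argument, is removing the dependence of $R(N)$ on $N$; without uniformity, monotone convergence $G_Nf\uparrow Gf$ cannot pass the distributional estimate from $G_N$ to $G$. If no uniform $R$ existed, I would extract $f_k\in L^p(X)$ with $\|f_k\|_{L^p}\to0$ and indices $N_k\to\infty$ so that $m\{G_{N_k}f_k>\lambda_k\}>\epsilon_0$ for some fixed $\epsilon_0>0$ and $\lambda_k\to\infty$, then build $f=\sum c_kf_k$ with coefficients chosen so that the series converges in $L^p(X)$ while preserving the large-deviation sets up to small overlap (a gliding-hump / Borel--Cantelli construction). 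The resulting $f$ would satisfy $Gf=+\infty$ on a set of positive measure, contradicting the a.e.\ finiteness of $Gf$ under the hypothesis. The contradiction delivers a uniform $R$, and monotone convergence then gives $m\{Gf>R\}\leq\epsilon$ for $\|f\|_{L^p(X)}\leq1$, which is precisely the boundedness conclusion required.
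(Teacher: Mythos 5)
The paper states this lemma as a citation to Nikishin and gives no proof of its own, so your attempt can only be judged on its own terms. Your hyperlinearity argument and the overall Baire-category strategy are correct, but the boundedness part has a genuine gap. Your first Baire application, run on the truncated operators $G_N$, does not use the a.e.-convergence hypothesis at all---a finite maximum of operators continuous in measure is automatically weakly bounded---so it contributes nothing except an $N$-dependent constant $R(N)$. The entire content of the lemma is then pushed into the final gliding-hump paragraph, which you state only as a plan. Filling it in is not routine: after forming $f=\sum_k c_kf_k$ you need, on a set of uniformly positive measure, a lower bound of the form $G_{N_k}f\ge G_{N_k}(c_kf_k)-G_{N_k}\bigl(\sum_{j\ne k}c_jf_j\bigr)$, and controlling the subtracted term requires splitting into past and future, invoking a.e.\ finiteness of $G$ on the (already fixed) past, invoking the $N_k$-dependent weak bound with geometric decay of norms on the (not yet chosen) future, and checking that the accumulated exceptional sets still leave each hump set $E_k$ with a common lower bound on measure. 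None of this is carried out, so as written the argument is incomplete precisely at the step that carries all the weight.

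A cleaner route that avoids the gliding hump entirely is to apply Baire once to the sets $\mathcal{L}_k=\{f\in L^p(X): m\{x:Gf(x)>k\}\le\epsilon\}$, defined directly in terms of $G$. Their union is all of $L^p(X)$ by the a.e.-convergence hypothesis (an a.e.-convergent sequence is a.e.\ bounded, so $Gf<\infty$ a.e.). To see that $\mathcal{L}_k$ is closed even though $G$ itself is not continuous in measure, write $m\{Gf>k\}=\lim_{N\to\infty} m\{G_Nf>k\}$ by monotone convergence; then for fixed $N$, $\tau>0$, and $f_j\to f$ with $f_j\in\mathcal{L}_k$,
\begin{equation*}
m\{G_Nf>k+\tau\}\le m\{G_Nf_j>k\}+m\{|G_Nf-G_Nf_j|>\tau\}\le\epsilon+o_j(1);
\end{equation*}
letting $j\to\infty$, then $\tau\to0^+$ by continuity of measure from below, then $N\to\infty$, yields $m\{Gf>k\}\le\epsilon$. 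Baire then gives some $\mathcal{L}_{k_0}$ with nonempty interior, and your translation-to-the-origin and rescaling argument (using $Gh\le G(f_0+h)+Gf_0$ and the homogeneity $G(\lambda f)=|\lambda|Gf$) finishes the proof with a single application of the category theorem.
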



Now, we apply these two lemmas to show Proposition \ref{prop:poconwe}.
Recall $$
P^{t}_{a,\gamma}f(x)=\int \hat{f}(\xi)e^{2\pi i(x\xi-t|\xi|^{\gamma})}e^{-t^{\gamma}|\xi|^{a}}\,\mathrm{d}\xi,\quad x\in\mathbb{R},~ \hat{f}\in L^{2}((1+|\xi|^{2})^{s}\mathrm{d}\xi).
$$
One can regard $P^{t}_{a,\gamma}$ as an operator on $L^{2}((1+|\xi|^{2})^s\mathrm{d}\xi)$.
By Chebyshev's inequality, one can find that $P^{t}_{a,\gamma}$ is continuous in measure. And if the almost everywhere convergence for $P^{t}_{a,\gamma}f(x)$ with $f\in H^{s}$ holds,
we have that $P^{*}_{a,\gamma}$ is bounded and hyperlinear by Lemma \ref{Nikishin 2}.
 Therefore,  Proposition \ref{prop:poconwe} follows from  Lemma \ref{Nikishin}.

\begin{proof}[{\bf The proof of Theorem \ref{neg-thm}:}]
First we consider $0<a<1$.

{\bf Case 1: $0<a<1.$}
By contradiction, we assume that
 the almost everywhere convergence \eqref{aeconv} holds for $s<\frac{1}{4}a\big(1-\frac{1}{\gamma}\big)$ with $\gamma>1$.
Then, using Proposition \ref{prop:poconwe}, we obtain for any $ \epsilon >0$, there exists a set $E_{\epsilon} \subset [0,1]$ with $mE_{\epsilon}> 1-\epsilon$ such that
\begin{equation}\label{equ:weak2-21}
\big|\{x\in E_{\epsilon}: |P^{*}_{a,\gamma}f(x)| > \lambda\}\big|
\le C_{\epsilon}\lambda^{-2}\|f\|^{2}_{H^{s}},
\end{equation}
for $\forall \lambda >0$, $f\in H^{s}(\mathbb{R})$ and $s<\frac{1}{4}a\big(1-\frac{1}{\gamma}\big)$. In the following, we will construct an counterexample to get an contradiction.

For $\nu \in(0,1)$,
choose $g_{\nu} \in S(\R)$ such that
\begin{equation*}
g_{\nu}(\xi) = \begin{cases}
1, \text{~\quad if \quad} |\xi|< \frac{1}{2}\nu^{(a-1)-\frac{a}{\gamma}};\\
0,\text{~\quad if \quad}|\xi|>\nu^{(a-1)-\frac{a}{\gamma}}.
\end{cases}
\end{equation*}
Let $\hat{f}_{\nu}(\xi) = \nu g_{\nu}(\nu\xi+\frac{1}{\nu})$, then we have
$$
\|f_{\nu}\|^{2}_{H^{s}}
= \int_{\mathbb{R}}(1+\xi^{2})^{s}|\hat{f}_{v}|^{2}\,\mathrm{d}\xi
= \nu^{2}\int_{\mathbb{R}}(1+\xi^{2})^{s}|g_{v}(\nu\xi+\frac{1}{\nu})|^{2}\,\mathrm{d}\xi
\lesssim \nu^{a-4s-\frac{a}{\gamma}}.
$$
Since $s<\frac{1}{4}a(1-\frac{1}{\gamma})$, we have
 $$\|f_{\nu}\|_{H^{s}}\rightarrow 0$$ as $\nu\rightarrow 0$.
Let $\eta = \nu\xi+\frac{1}{\nu}$, and recall that
$$
P^{t}_{a,\gamma}f_{\nu}(x) = \int_{\mathbb{R}}e^{i(x\xi+t|\xi|^{a})}e^{-t^{\gamma}|\xi|^{a}}\nu g_{\nu}(\nu\xi+\frac{1}{\nu})\,\mathrm{d}\xi,
$$
then
\begin{align*}
|P^{t}_{a,\gamma}f_{\nu}(x)|
&= \left|\int_{\mathbb{R}}e^{i(x\frac{1}{\nu}(\eta-\frac{1}{\nu})+t|\frac{1}{\nu}(\eta-\frac{1}{\nu})|^{a})}
e^{-t^{\gamma}|\frac{\eta}{\nu}-\frac{1}{\nu^{2}})|^{a}}g_{\nu}(\eta)\,\mathrm{d}\eta\right|\\
&= \left|\int_{\mathbb{R}}e^{i[x\frac{\eta}{\nu}+t|\frac{\eta}{\nu}-\frac{1}{\nu^{2}}|^{a}]}
e^{-t^{\gamma}|\frac{\eta}{\nu}-\frac{1}{\nu^{2}}|^{a}}g_{\nu}(\eta)\,\mathrm{d}\eta\right|.
\end{align*}
Let
\begin{align*}
F_{x,t,\nu}(\eta) = x\frac{\eta}{\nu}+t\left|\frac{\eta}{\nu}-\frac{1}{\nu^{2}}\right|^{a}-\frac{t}{\nu^{2a}},~
G_{t,\nu}(\eta) = t^{\gamma}\left|\frac{\eta}{\nu}-\frac{1}{\nu^{2}}\right|^{a}.
\end{align*}
By the property of the support of $g_{\nu}$, we have
$$
|P^{t}_{a,\gamma}f_{\nu}(x)| \ge \left|\int_{-\nu^{(a-1)-\frac{a}{\gamma}}}^{\nu^{(a-1)-\frac{a}{\gamma}}}
\cos(F_{x,t,\nu}(\nu))e^{-G_{t,\nu}(\eta)}g_{\nu}(\eta)\,\mathrm{d}\eta\right|.
$$
Using Taylor's formula for $|\eta|\le \nu^{(a-1)-\frac{a}{\gamma}}$, we have
$$
\left|\frac{\eta}{\nu}-\frac{1}{\nu^{2}}\right|^{a} = \frac{1}{\nu^{2a}} - \frac{a\eta}{\nu^{2a-1}}
+ \frac{a(a-1)}{2}\frac{\eta^{2}}{\nu^{2(a-1)}} + o\left(\frac{t\eta^{2}}{\nu^{2(a-1)}}\right),
$$
then
$$
F_{x,t,\nu}(\eta) = x\frac{\eta}{\nu} - \frac{ta\eta}{\nu^{2a-1}} +\frac{a(a-1)}{2}\frac{t\eta^{2}}{\nu^{2(a-1)}} + o\left(\frac{t\eta^{2}}{\nu^{2(a-1)}}\right).
$$
For $x\in[0,a\nu^{\frac{2a}{\gamma}-2(a-1)}]\subset[0,1]$, fix $t = \frac{x\nu^{2(a-1)}}{a} \in (0,1)$. Then
$$
F_{x,t,\nu}(\eta) = \frac{a-1}{2}x\eta^{2} + o(x\eta^{2}),
$$
and
$$
|F_{x,t,\nu}(\eta)| \lesssim |x\eta^{2} + o(x\eta^{2})| \lesssim a\nu^{\frac{2a}{\gamma}-2(a-1)}\nu^{2[(a-1)-\frac{a}{\gamma}]}\lesssim a <1.
$$
Similarly,
$$
G_{t,\gamma}(\eta) = t^{\gamma}\left|\frac{\eta}{\nu} - \frac{1}{\nu^{2}}\right|^{a}
= \frac{t^{\gamma}}{\nu^{2a}} - \frac{at^{\gamma}\eta}{\nu^{2a-1}} + \frac{a(a-1)}{2}\frac{t^{\gamma}\eta^{2}}{\nu^{2(a-1)}} + o\left(\frac{t^{\gamma}\eta^{2}}{\nu^{2(a-1)}}\right).
$$
Since $|\eta|\le \nu^{(a-1)-\frac{a}{\gamma}} \le \nu^{-1}$, $x\in[0,a\nu^{\frac{2a}{\gamma}-2(a-1)}]\subset[0,1]$ and $t = \frac{x\nu^{2(a-1)}}{a}$, then
\begin{align*}
|G_{t,\nu}(\eta)|
&\lesssim \frac{t^{\gamma}}{\nu^{2a}} + \frac{t^{\gamma}}{\nu^{2a}} \frac{\eta}{\nu^{-1}} + \frac{t^{\gamma}}{\nu^{2a}}\frac{\eta^{2}}{\nu^{-2}} + o(\frac{t^{\gamma}}{\nu^{2a}}\frac{\eta^{2}}{\nu^{-2}})\\
&\lesssim \frac{t^{\gamma}}{\nu^{2a}} + o(\frac{t^{\gamma}}{\nu^{2a}})\\
&\lesssim x^{\gamma}\nu^{2a\gamma-2\gamma-2a} + o(x^{\gamma}\nu^{2a\gamma-2\gamma-2a})\\
&\lesssim 1.
\end{align*}

In conclusion, when $x\in[0,a\nu^{\frac{2a}{\gamma}-2(a-1)}]$, $t = \frac{x\nu^{2(a-1)}}{a}$, and $\eta\in[-\nu^{(a-1)-\frac{a}{\gamma}},\nu^{(a-1)-\frac{a}{\gamma}}]$, there holds
\begin{align*}
\cos(F_{x,t,\nu}(\eta)) &\gtrsim C,\quad e^{-G_{t,\nu}(\eta)}\gtrsim C.
\end{align*}
Hence,
$$
|P^{*}_{a,\gamma}f_{\nu}(x)| \gtrsim \nu^{(a-1)-\frac{a}{\gamma}},\qquad for\,\,\, x\in[0,a\nu^{\frac{2a}{\gamma}-2(a-1)}].
$$
Put this inequality into  \eqref{equ:weak2-21}, we have
\begin{equation}\label{weak(2,2)}
\begin{split}
&|\{x\in E_{\epsilon}\cap[0,a\nu^{\frac{2a}{\gamma}-2(a-1)}]: |P^{*}_{a,\gamma}f_{\nu}(x)|\gtrsim \nu^{(a-1)-\frac{a}{\gamma}}\}|\\
\le& |\{x\in E_{\epsilon}: |P^{*}_{a,\gamma}f_{\nu}(x)|\gtrsim \nu^{(a-1)-\frac{a}{\gamma}}\}|\\
\lesssim& \nu^{-2((a-1)-\frac{a}{\gamma})}\|f_{\nu}\|^{2}_{H^{s}}\\
\lesssim& \nu^{2(\frac{a}{\gamma})}\nu^{a-4s-\frac{a}{\gamma}}.
\end{split}
\end{equation}
If we take $\epsilon =\frac14$, then
$$
E_{\frac{1}{4}} > m[0,1] - \tfrac{1}{4}\geq \tfrac12m[0,1].
$$
For $\forall \lambda\in (0, \frac{1}{10})$, we claim that there exists $x_{0}\in[0,1]$ such that
\begin{equation}\label{translation}
m\big({E_{\epsilon}\cap[x_{0}, x_{0}+\lambda]}\big) \ge \tfrac{1}{2}m[x_{0},x_{0}+\lambda].
\end{equation}
Indeed, we split the interval $[0,1]$ as $[0,1] = \bigcup_{k}[x_{k},x_{k}+\lambda]$. Then we have
$$
m{E_{\epsilon}} = \sum_{k}m\big(E_{\epsilon}\cap[x_{k}, x_{k}+\lambda]\big),
$$
and
$$
m[0,1] = \sum_{k}m[x_{k}, x_{k}+\lambda].
$$
By pigeonholing, we prove the claim \eqref{translation}.

Let $\lambda = a\nu^{\frac{2a}{\gamma}-2(a-1)}$ and $\tilde{f}_{\nu}(x) = f_{\nu}(x-x_{0})$, then we have by \eqref{weak(2,2)}
\begin{align*}
\frac{1}{2}a\nu^{\frac{2a}{\gamma}-2(a-1)} &= \frac{1}{2}|[x_{0}, x_{0}+a\nu^{\frac{2a}{\gamma}-2(a-1)}]|
\le |E_{\epsilon}\cap[x_{0}, x_{0}+a\nu^{\frac{2a}{\gamma}-2(a-1)}]|\\
&\le |\{x\in E_{\epsilon}\cap[x_{0}, x_{0}+a\nu^{\frac{2a}{\gamma}-2(a-1)}]: |P^{*}_{a,\gamma}\tilde{f}_{\nu}(x)|\gtrsim \nu^{(a-1)-\frac{a}{\gamma}}\}|\\
&\le C_{\epsilon}\nu^{-2((a-1)-\frac{a}{\gamma})}\|\tilde{f}_{\nu}\|^{2}_{H^{s}} = C_{\epsilon}\nu^{-2((a-1)-\frac{a}{\gamma})}\|f_{\nu}\|^{2}_{H^{s}}\\
&\le C_{\epsilon}\nu^{2(\frac{a}{\gamma})}\nu^{a-4s-\frac{a}{\gamma}}.
\end{align*}
This inequality yields
\begin{equation}\label{controdict}
\nu^{a-4s-\frac{a}{\gamma}} \gtrsim a
\end{equation}
Let $\nu$ tend to $0$, then $\nu^{a-4s-\frac{a}{\gamma}} \rightarrow 0$ for $s < \frac{1}{4}a(1-\frac{1}{\gamma})$, which contradicts with \eqref{controdict}.

In conclusion, for $0<a<1, $if $s < \frac{1}{4}a\big(1-\frac{1}{\gamma}\big)$, we see that the weak type $(2,2)$ inequality
$$
|\{x\in\mathbb{R}; |P^{*}_{a,\gamma}f_{\nu}(x)|\gtrsim \nu^{(a-1)-\frac{a}{\gamma}}\}|
\lesssim \nu^{-2\big((a-1)-\frac{a}{\gamma}\big)} \|f_{\nu}\|^{2}_{H^{s}}
$$
fails, which implies almost everywhere convergence fails either.

Next, we turn to look at the case $a=1$.

{\bf Case 2: $a=1.$}
 Similarly by contradiction, we assume that
 the almost everywhere convergence \eqref{aeconv} holds for $s<\frac{1}{2}\big(1-\frac{1}{\gamma}\big)$ with $\gamma>1$.
Then, using Proposition \ref{prop:poconwe}, we obtain for any $ \epsilon >0$, there exists a set $\tilde{E}_{\epsilon} \subset [0,1]$ with $m\tilde{E}_{\epsilon}> 1-\epsilon$ such that
\begin{equation}\label{equ:weak2-21-1}
\big|\{x\in \tilde{E}_{\epsilon}: |P^{*}_{1,\gamma}f(x)| > \lambda\}\big|
\le C_{\epsilon}\lambda^{-2}\|f\|^{2}_{H^{s}},
\end{equation}
for $\forall \lambda >0$, $f\in H^{s}(\mathbb{R})$ and $s<\frac{1}{2}\big(1-\frac{1}{\gamma}\big)$. As before, we will construct an counterexample to get an contradiction.

For $N\ge1$ and $\gamma>1$, we choose the set $A=[-N,-\frac{N}{2}]$ and $E=[0,N^{-\frac{1}{\gamma}}]$. Let
$$
\hat{f}_{A}(\xi) = \chi_{A}(\xi),
$$
and we have
\begin{equation}\label{norm}
\|f_{A}\|_{H^{s}} = \left(\int \left(1+|\xi|^{2}\right)^{s}|\hat{f}_{A}|^{2}\,\mathrm{d}\xi\right)^{\frac{1}{2}} \sim N^{s}N^{\frac{1}{2}}
\end{equation}
It is easy to see that
\begin{equation}\label{eg1}
|P_{1,\gamma}^{t}f_{A}|
= \left|\int_{A} e^{ix\xi}e^{it|\xi|}e^{-t^{\gamma}|\xi|}\,\mathrm{d}\xi\right|.
\end{equation}
Choose $t=x\in E$, then $|x^{\gamma}N|\le1$ and
\begin{equation}\label{c1}
\sup_{0<t<1}|P_{1,\gamma}^{t}f_{A}|
\ge\left|\int_{A} e^{ix\xi}e^{it|\xi|}e^{-t^{\gamma}|\xi|}\,\mathrm{d}\xi\right|
= \left|\int_{A} e^{-t^{\gamma}|\xi|}\,\mathrm{d}\xi\right| \gtrsim Ne^{-x^{\gamma}N} \gtrsim N.
\end{equation}
By \eqref{translation}, there exists $\tilde{x}_{0}\in[0,1]$ such that
\begin{equation}\label{translation-1}
m(\tilde{E}_{\epsilon}\cap[\tilde{x}_{0},\tilde{x}_{0}+N^{-\frac1\gamma}]) \ge \tfrac{1}{2}m[\tilde{x}_{0},\tilde{x}_{0}+N^{-\frac1\gamma}].
\end{equation}

Let $\tilde{f}_{A}(x) = f_{A}(x-\tilde{x}_{0})$, then by \eqref{equ:weak2-21-1}, \eqref{c1} and \eqref{translation-1}, there holds that
\begin{align*}
\frac{1}{2}N^{-\frac1\gamma} &= \frac{1}{2}|[x_{0}, x_{0}+N^{-\frac1\gamma}]|
\le |\tilde{E}_{\epsilon}\cap[x_{0}, x_{0}+N^{-\frac1\gamma}]|\\
&\le |\{x\in \tilde{E}_{\epsilon}\cap[x_{0}, x_{0}+N^{-\frac1\gamma}]: |P^{*}_{1,\gamma}\tilde{f}_{A}(x)|\gtrsim N\}|\\
&\le C_{\epsilon}N^{-2}\|\tilde{f}_{A}\|^{2}_{H^{s}} = C_{\epsilon}N^{-2}\|f_{A}\|^{2}_{H^{s}}\\
&\le C_{\epsilon}N^{2s-1}.
\end{align*}
From this inequality, we can obtain
\begin{equation}\label{c3}
N^{2s-1+\frac1\gamma} \ge C.
\end{equation}
Let $N$ tend to $\infty$, then $N^{2s-1+\frac1\gamma} \rightarrow \infty$ for $s < \frac{1}{2}(1-\frac{1}{\gamma})$, which contradicts with \eqref{c3}.

In conclusion, for $a=1$, if $s < \frac{1}{2}(1-\frac{1}{\gamma})$, we see that the weak type $(2,2)$ inequality
$$
|\{x\in\mathbb{R}; |P^{*}_{1,\gamma}f_{A}(x)|\gtrsim N\}|
\lesssim N^{-2} \|f_{A}\|^{2}_{H^{s}}
$$
fails, which implies almost everywhere convergence fails either.

\end{proof}

\section{Hausdorff Dimension of Divergent points}

In this section, we will discuss the problem for the set of the divergent points. First we need to establish the maximal estimate for the operator $P^{t}_{a,\gamma}$ with $0<a\leq1$ and $\gamma>0$ in the general Borel measure $\mu \in \mathfrak{M}(B(0,1))$. As a consequence, we  obtain Theorem \ref{main-thm-dim-1} by the Frostman lemma below.

\vskip-0.8cm

\begin{proof}[{\bf The proof of Theorem $\ref{lemma:main-a}$}]

Using the Kolmogrov-Selierstov-Plessner method, one can find a Borel function $t = t(x): \mathbb{R} \rightarrow (0,1)$, and a Borel function $\omega\in L^{\infty}(\mu)$ with $\|\omega\|_{L^{\infty}(\mu)} \le 1$ such that
\begin{align*}
\|P^{\ast}_{a,\gamma}f(x)\|_{L^{1}(\mu)}
 \le &~ 2\|P^{t(x)}_{a,\gamma} f(x)\|_{L^{1}(\mu)} \le ~2\liminf_{N \rightarrow \infty} \|P^{t(x)}_{a,\gamma,N}f(x)\|_{L^{1}(\mu)} \\
\le &~ 2 \liminf_{N \rightarrow \infty} \int P^{t(x)}_{a,\gamma,N}f(x)~\overline{\omega(x)}\,\mathrm{d}\mu (x).
\end{align*}
Take the function $\chi(\xi)\in S(\mathbb{R})$ such that
$$
\chi(\mathbb{R}) \subset [0,1], \,\,\chi(\xi) = 1 \,\,\text{on} \,\,[-1,1], \,\,{\rm supp}\,\, \chi(\xi)\subset[-2,2].
$$
Then by Fubini's theorem, Cauchy-Schwarz's inequality, and Lemma \ref{maximal estimate}, similar to \eqref{equ:kxyest}, we obtain
\begin{align*}
&\left|\int(P^{t(x)}_{a,\gamma,N}f(x))\right.\left.\vphantom{\int_{B(0,1)}}\overline{\omega(x)}\,\mathrm{d}\mu (x)\right|^{2} \\
=~ & \left|\int_{\mathbb{R}}\hat{f}(\xi) 
\eta(\frac{\xi}{N})
\int e^{it(x)|\xi|^{a}}e^{-t(x)^{\gamma}|\xi|^{a}}e^{ix\xi}g(x)\eta(\frac{x}{N})\,\mathrm{d}\mu(x)\mathrm{d}\xi \right|^{2}\\
\le ~ &\|f\|^{2}_{H^{s}(\mathbb{R})} \iint |\omega(x)\overline{\omega(y)}|(1+~W(x,y)~)\mathrm{d}\mu(x)\mathrm{d}\mu(y),
\end{align*}
where
\begin{equation}
\begin{split}
W(x,y)= ~ &\left|\int_{\mathbb{R}}e^{i(t(x)-t(y))|\xi|^{a}-i(y-x)\xi}(1+\xi^{2})^{-s}(1-\chi(\xi))e^{-(t(x)^{\gamma}+t(y)^{\gamma})|\xi|^{a}}\eta(\frac{\xi}{N})^{2}
\,\mathrm{d}\xi\right|.\\
\end{split}
\end{equation}
Then by Lemma
\ref{maximal estimate} , Remark \ref{maximal estimate-a>1} and  the fact that $\|\omega\|_{L^{\infty}(\mu)} \le 1$, we finish the proof.

\end{proof}

To prove  Theorem \ref{main-thm-dim-1},
we recall the following lemma, which builds the relation between $\textup{dim}\,\,U$ and the energy $I_{s}(\mu)$ of Borel measue $\mu$ on $U$. 

\begin{lemma}[Frostman, \cite{Mattila}]\label{dim}
For a Borel set $U\subset \mathbb{R}^{n}$,
\begin{center}
  \textup{dim} U = $\sup$\{s: there is $\mu\in\mathfrak{M}(U)$ such that $I_{s}(\mu)<\infty$\}.
\end{center}
\end{lemma}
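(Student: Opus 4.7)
The plan is to establish both inequalities between $\dim U$ and $S(U):=\sup\{s:\exists\,\mu\in\mathfrak{M}(U)\text{ with }I_{s}(\mu)<\infty\}$, corresponding to the two classical directions of Frostman's energy characterization.

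For $\dim U\ge S(U)$, I would invoke the mass distribution principle. Fix $s<S(U)$ and choose $\mu\in\mathfrak{M}(U)$ with $I_{s}(\mu)<\infty$. Writing the energy as $I_{s}(\mu)=\int U^{s}_{\mu}(x)\,\mathrm{d}\mu(x)$, where the $s$-potential is $U^{s}_{\mu}(x)=\int|x-y|^{-s}\,\mathrm{d}\mu(y)$, the finiteness of the energy forces $U^{s}_{\mu}<\infty$ for $\mu$-a.e.\ $x$. Hence for some $M>0$ the set $A=\{x:U^{s}_{\mu}(x)\le M\}$ satisfies $\mu(A)>0$. The elementary bound $\mu(B(x,r))\le r^{s}U^{s}_{\mu}(x)$ then yields $(\mu|_{A})(B(x,r))\lesssim M\,r^{s}$ uniformly in $x\in A$ and $r>0$. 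The mass distribution principle therefore forces $\mathcal{H}^{s}(A)\gtrsim \mu(A)/M>0$, so $\dim U\ge\dim A\ge s$; letting $s\uparrow S(U)$ gives the first inequality.

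For $\dim U\le S(U)$, fix $t<s<\dim U$. Then $\mathcal{H}^{s}(U)=\infty$, and by inner regularity one extracts a compact subset $K\subset U$ with $0<\mathcal{H}^{s}(K)<\infty$. The classical Frostman mass-distribution lemma then supplies a Borel measure $\mu\in\mathfrak{M}(K)$ with $\mu(B(x,r))\le C\,r^{s}$ for every $x\in\mathbb{R}^{n}$ and $r>0$. To bound the $t$-energy I would use the layer-cake identity
\begin{equation*}
\int|x-y|^{-t}\,\mathrm{d}\mu(y)=t\int_{0}^{\infty}r^{-t-1}\mu(B(x,r))\,\mathrm{d}r,
\end{equation*}
splitting the radial integral at $r=\mathrm{diam}(K)$: on $(0,\mathrm{diam}(K)]$ the Frostman bound gives an integrable tail $r^{s-t-1}$ since $s>t$, and on $(\mathrm{diam}(K),\infty)$ the trivial total-mass bound gives an integrable tail $r^{-t-1}$. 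Integrating once more against $\mathrm{d}\mu(x)$ shows $I_{t}(\mu)<\infty$, so $t\le S(U)$; letting $t\uparrow\dim U$ completes the argument.

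The main obstacle is the construction of the Frostman measure on $K$, namely a dyadic scheme that assigns initial weight $2^{-ks}$ to each dyadic cube of side $2^{-k}$ meeting $K$ and then renormalizes from the finest scale upward so that every ball of radius $r$ has mass at most $C\,r^{s}$ while preserving a strictly positive total mass. Since Lemma \ref{dim} is cited from Mattila, I would ultimately defer the delicate dyadic renormalization to that reference rather than reproduce it here.
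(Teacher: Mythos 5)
The paper itself offers no proof of this lemma: it is quoted verbatim from Mattila's book, so there is nothing internal to compare against. Your sketch is the standard two-sided energy characterization and is essentially sound. The direction $\dim U\ge S(U)$ via the potential bound $\mu(B(x,r))\le r^{s}U^{s}_{\mu}(x)$ and the mass distribution principle is correct; the only imprecision is that $A=\{x:U^{s}_{\mu}(x)\le M\}$ need not lie in $U$, so you should work with $A\cap\operatorname{spt}\mu$ (recalling $\operatorname{spt}\mu\subset U$) before concluding $\mathcal{H}^{s}$-positivity of a subset of $U$; also, in the covering argument the ball bound at a point of $A$ must be transferred to arbitrary covering sets meeting $A$ (a set $E$ with $E\cap A\ni x$ satisfies $\mu|_A(E)\le\mu(B(x,d(E)))\le M\,d(E)^{s}$), which your phrase ``uniformly in $x\in A$'' glosses over but is routine. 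The converse direction is also structured correctly, but be aware that the step you call ``inner regularity'' is not mere regularity of $\mathcal{H}^{s}$: when $\mathcal{H}^{s}(U)=\infty$ the measure need not be $\sigma$-finite on $U$, and the existence of a compact $K\subset U$ with $0<\mathcal{H}^{s}(K)<\infty$ is the Besicovitch--Davies subset theorem for Borel (indeed Souslin) sets. That theorem, together with the dyadic Frostman construction you defer, are precisely the nontrivial inputs being cited from Mattila — which is consistent with how the paper treats the lemma, so your deferral is acceptable; the layer-cake estimate finishing $I_{t}(\mu)<\infty$ for $t<s$ is correct as written.
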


\begin{proof}[{\bf The proof of Theorem $\ref{main-thm-dim-1}$}]



First, we give the detail proof of Theorem \ref{main-thm-dim-1}(i).
Suppose $I_{1-2s}(\mu)<\infty$. Let
$$
\mathcal {Q} = \big\{x\in \mathbb{R}: P^{t}_{a,\gamma}f(x) \not\rightarrow f(x)\ as\ t\rightarrow 0 \big\}.
$$
$\mathcal {Q}$ is obviously a Borel set. Let $\lambda\in(0,1)$, $\epsilon>0$ and choose a smooth function $g$ for which $\|f-g\|_{H^{s}(\mathbb{R})}<\lambda\epsilon$. Since $\lim_{t\rightarrow 0^+}P^{t}_{a,\gamma}g(x)=g(x)$, for any $ x\in \mathbb{R}$, then we have
\begin{equation}\label{last-1}
\begin{split}
\limsup_{t\rightarrow 0}|P^{t}_{a,\gamma}f(x)-f(x)|
\leq~  P^{\ast}_{a,\gamma}(f-g)(x)+|f(x)-g(x)|,
\end{split}
\end{equation}
for any \ $x\in \mathbb{R}$.
Therefore, by the estimate \eqref{energy-est} and the Cauchy-Schwarz inequality, we have
\begin{equation}\label{last-2}
\begin{split}
&\mu(\{x: \limsup_{t\rightarrow 0}|P^{t}_{a,\gamma}f(x)-f(x)|>\lambda\})\\
\lesssim & \lambda^{-1}\sqrt{I_{1-2s}(\mu)}\|f-g\|_{H^{s}(\mathbb{R})} +\lambda^{-1} \|f-g\|_{L^1(d\mu)}\\
\lesssim &\sqrt{I_{1-2s}(\mu)}\epsilon,
\end{split}
\end{equation}
which yields $\mu(\mathcal {Q})=0$.

If we assume $\textup{dim~} \mathcal {Q}> 1-2s$, then by Lemma \ref{dim}, there exists a Borel measure $\mu\in\mathfrak{M}(\mathcal {Q})$ with $I_{1-2s}(\mu)<\infty$.
And by \eqref{last-1} and \eqref{last-2}, we have $\mu(\mathcal {Q})=0$, which contradicts with the condition $\mu\in\mathfrak{M}(\mathcal {Q})$ and $0<\mu(\mathcal {Q})<\infty$.
In conclusion, we have $\textup{dim~} \mathcal {Q}\leq 1-2s$ in this case.

The proofs of Theorem \ref{main-thm-dim-1}(ii)(iii)(iv) are similar to that of (i) above.
Therefore, we conclude the  proof of Theorem \ref{main-thm-dim-1}.

\end{proof}


\begin{center}

\end{center}


\begin{thebibliography}{99}


\bibitem{Bailey}
 D. Bailey,
\newblock Boundedness of maximal operators of Schr\"{o}dinger type with complex time,
\newblock Rev. Mat. Iberoam., 29(2013), 531-546.

\bibitem{BBCR}
J.A.Barcelo, J.Bennet, A.Carbery and K.M.Rogers,
\newblock On the dimension of divergence sets of dispersive equations,
\newblock Math. Ann., 349(2011), 599-621.

\bibitem{BR-TAMS-1960}
R. Blumenthal and R. Getoor.
 Some theorems on stable processes.
Tran. Amer. Math. Soc., 95(1960), 263-273.



\bibitem{Carleson}
L. Carleson,
\newblock Some analytic problems related to statistical mechanics.
\newblock Euclidean Harmonic Analysis(Proc.Sem, Univ.Maryland, College Park, Md.1979), Vol.779 of Lecture Notes in Math. Spring, Berlin, 1980, 5-45.

\bibitem{CDW-2013} T. Cazenave, F. Dickstein, F B. Weissler, Finite-Time blowup for a complex
 Ginzburg--Landau equation. SIAM Journal on Mathematical Analysis,  45(2013): 244-266.


\bibitem{D-K}
B. E. J. Dahlberg, C. E. Kenig,
\newblock A note on the almost everywhere behavior of solutions to the Schr\"{o}dinger equation.
\newblock Harmonic Analysis(Minneapolis, Minn., 1981)Vol.908 of Lecture Notes in Math. Springer, Berlin, 1981, 205-209.



\bibitem{Glafakos M}
L. Grafakos,
\newblock Modern Fourier Analysis, Second Edition,
\newblock Springer-Verlag, New York, 2008.


\bibitem{Mattila}
P. Mattila,
\newblock Fourier Analysis and Hausdorff Dimension.
\newblock Cambridge University Press, 2015.


\bibitem{MYZ}
C. Miao, B. Yuan and B. Zhang,
\newblock Well-posedness of the Cauchy problem for the fractional power dissipative equations.
\newblock Nonlinear Analysis, 68(2008), 461-484.


\bibitem{Miao}
C. Miao,
\newblock Lecture on modern Harmonic analysis and applications.
\newblock Monographs on Modern Pure Mathematics, No. 63, Higher Education Press, Beijing, 2018.


\bibitem{Ninkishin}
E. M. Nikishin,
\newblock A resonance theorem and series in eigenfunctions of the Laplacian operator.
\newblock Math.USSR Izvestija, 6(1972), 788-806.

\bibitem{ouhabaz}
E. M. Ouhabaz,
\newblock Analysis of heat equations on domains.
\newblock Princeton University Press, 2009.


\bibitem{RV-2008}
K. Rogers, P. Villarroya, Sharp estimates for maximal operators associated to the wave equation. Arkiv f\"or Matematik, 46(2008), 143-151.



\bibitem{Sjolin}
P. Sj\"{o}lin,
\newblock Regularity of solutions to the Schr\"{o}dinger equation.
\newblock Duke Math.J., 55(1987), 699-715.

\bibitem{Sjo}
P. Sj\"{o}lin,
\newblock Maximal operators of Schr\"{o}dinger type with a complex parameter,
\newblock Math.Scand., 105(2009), 121-133.

\bibitem{Sjo So}
P. Sj\"{o}lin and F. Soria,
\newblock A note on Schr\"{o}dinger maximal operators with complex parameter,
\newblock J. Aust. Math. Soc., 88(2010), 405-412.

\bibitem{Sogg}
C. D. Sogge,
\newblock Fourier integrals in classical analysis (2nd edition),
\newblock Cambridge University Press, 2017.




\bibitem{Stein}
E. M. Stein,
\newblock Harmonic analysis : real-variable methods, orthogonality, and oscillatory integrals,
\newblock Princeton University Press, 2006.



\bibitem{Walther}
B. G. Walther,
\newblock Maximal estimates for oscillatory integrals with concave phase,
\newblock Contemp. Math., 189(1995), 167-182.

\bibitem{Walther-global}
B. G. Walther,
\newblock Estimates with global range for oscillatory integrals with concave phase.
\newblock Colloquium Mathematicum,  91(2002), 157-165.




\end{thebibliography}
\end{document}